\documentclass[a4paper]{amsart}

% packages:
\usepackage[english]{babel}
\usepackage[utf8]{inputenc}

\usepackage{amssymb}
\usepackage{amsmath}
\usepackage{amsthm}
\usepackage{bbm}
\usepackage[normalem]{ulem}

\usepackage{enumerate}
\usepackage{tikz}
\usepackage{marginnote}

% shortcuts for symbols:

\newcommand{\bbC}{\mathbb{C}}

\newcommand{\bbN}{\mathbb{N}}

\newcommand{\bbR}{\mathbb{R}}

\newcommand{\bbZ}{\mathbb{Z}}

\newcommand{\calA}{\mathcal{A}}

\DeclareMathOperator{\Real}{Re}

\DeclareMathOperator{\diver}{div}

\newcommand{\R}{\mathbb{R}}

\newcommand{\mG}{\mathsf{G}}
\newcommand{\mD}{\mathsf{D}}
\newcommand{\mV}{\mathsf{V}}
\newcommand{\mE}{\mathsf{E}}
\newcommand{\mv}{\mathsf{v}}
\newcommand{\mw}{\mathsf{w}}
\newcommand{\me}{\mathsf{e}}

\DeclareMathOperator{\dist}{dist}

\DeclareMathOperator{\id}{id}
\DeclareMathOperator{\one}{\mathbbm{1}}
\DeclareMathOperator{\re}{Re}

\newcommand{\argument}{\mathord{\,\cdot\,}}
\newcommand{\dx}{\;\mathrm{d}}
\DeclareMathOperator{\sgn}{sgn}

% spectral theory, etc:
\newcommand{\spec}{\sigma}
\newcommand{\Res}{\mathcal{R}}
\newcommand{\per}{{\operatorname{per}}}

\DeclareMathOperator{\spb}{s}
\newcommand{\norm}[1]{\left\lVert #1 \right\rVert}
\newcommand{\modulus}[1]{\left\lvert #1 \right\rvert}

% sideremarks

% theorems, etc.:
\theoremstyle{definition}
\newtheorem{definition}{Definition}[section]
\newtheorem{remark}[definition]{Remark}

\newtheorem{example}[definition]{Example}

\theoremstyle{plain}
\newtheorem{prop}[definition]{Proposition}

\newtheorem{theorem}[definition]{Theorem}

% counters:
\numberwithin{equation}{section}

\begin{document}

\title[Eventual Domination]{Eventual Domination for Linear Evolution Equations}

\author{Jochen Gl\"uck}
\address{Jochen Gl\"uck, Faculty of Computer Science and Mathematics, University of Passau, Innstra{\ss}e 33, D-94032 Passau, Germany}
\email{jochen.glueck@uni-passau.de}

\author{Delio Mugnolo}
\address{Delio Mugnolo, Faculty of Mathematics and Computer Science, University of Hagen, D-58084 Hagen, Germany}
\email{delio.mugnolo@fernuni-hagen.de}

\keywords{Eventual domination, eventually positivity, strongly continuous semigroups}
\subjclass[2010]{Primary 47D06; Secondary: 47B65, 46B42}
\date{\today}

\dedicatory{Dedicated with great pleasure to our teacher and friend Wolfgang Arendt on the occassion of his 70th birthday}
\thanks{The work of Delio Mugnolo was partially supported by the Deutsche Forschungsgemeinschaft (Grant 397230547).}

\begin{abstract}
	We consider two $C_0$-semigroups on function spaces or, more generally, Banach lattices and give necessary and sufficient conditions for the orbits of the first semigroup to dominate the orbits of the second semigroup for large times. As an important special case we consider an $L^2$-space and self-adjoint operators $A$ and $B$ which generate $C_0$-semigroups; in this situation we give criteria for the existence of a time $t_1 \ge 0$ such that $e^{tB} \ge e^{tA}$ for all subsequent times $t\ge t_1$. 
	
As a consequence of our abstract theory, we obtain many surprising insights into the behaviour of various second and fourth order differential operators.
\end{abstract}

\maketitle

\section{Introduction} \label{section:introduction}

Ever since Simon's pioneering investigation in~\cite{Sim77} it is a classical question in the theory of $C_0$-semigroups whether a given semigroup $(e^{tB})_{t \ge 0}$ \emph{dominates} a semigroup $(e^{tA})_{t \ge 0}$, meaning that $\lvert e^{tA}f\rvert \le e^{tB}\lvert f\rvert$ for all times $t$ and all $f$ in the underlying Banach space $E$. Of course, this only makes sense if $E$ is endowed with an appropriate order structure, i.e., if $E$ is for instance an $L^p$-space or, more generally, a Banach lattice.

It is known since~\cite{HesSchUhl77} that domination of semigroups can be characterised in terms of their generators; we refer to \cite[Section~C-II-4]{Nag86} for an extension to the case of semigroups acting on general Banach lattices. For the important special case of semigroups generated by operators on $L^2$ associated with quadratic forms, domination can also be described in terms of the forms and their domains. This is a classical result of Ouhabaz \cite[Section~3]{Ouh96}, which has recently been generalised into a different direction \cite{LenSchWir20}.

It should not come as a surprise that domination is closely related to \emph{positive} semigroups. On the other hand, the behaviour of several concrete evolution equations -- namely the bi-harmonic heat equation on $\mathbb{R}^n$ \cite{FerGazGru08, Ferrero2008} and the Dirichlet-to-Neumann semigroup on the two-dimensional unit circle \cite{Daners2014} -- have recently given rise to a generalisation of the concept of positive semigroups to so-called \emph{eventually positive} semigroups. Those are, roughly speaking, semigroups $(e^{tA})_{t \ge 0}$ with the property that $e^{tA}f \ge 0$ for all \textit{sufficiently large} times $t$ (as opposed to: for all $t\ge 0$) and all $f \ge 0$. An abstract and general approach to study this behaviour first appeared in the papers \cite{Daners2016, DanGluKen16b}; those papers also contain various further examples of the eventual positivity phenomenon. Later, the theory was further developed in \cite[Part~II]{GlueckDISS} und \cite{Daners2017, DanersPERT, DanersUNIF}. Recent applications to bi- and polyharmonic equations on networks with stationary or dynamic boundary conditions can be found in \cite{GregorioMugnolo,GregorioPreprint}.

The successful set-up of a theory of eventual positivity, together with numerous examples that are now known for this behaviour, suggests that a similar generalisation of semigroup \emph{domination} is a worthwhile endeavour. The present paper is a first contribution towards such a theory of \emph{eventual domination}. To keep things handy, we restrict ourselves to the case where two semigroups $(e^{tA})_{t \ge 0}$ and $(e^{tB})_{t \ge 0}$ are already known to be eventually positive; i.e., again roughly speaking, we assume that $e^{tA}f \ge 0$ and $e^{tB}f \ge 0$ for all $f \ge 0$ and all sufficiently large times, and we intend to characterise whether we also have $e^{tB}f \ge e^{tA}f$ for all sufficiently large times.

Of course, several technical details have to be taken into account when we make this precise. For instance, it has to be clarified whether the time $t_0$ from which on $e^{tB}f \ge e^{tA}f$ holds is allowed to depend on the initial value $f$; this yields two concepts that can be described as \emph{individual} and \emph{uniform} eventual domination. Moreover, we will also replace mere positivity and domination with somewhat stronger concepts which are more interesting in applications and which also have several theoretical advantages; see the beginning of Section~\ref{section:a-characterisation-of-eventual-dommination} for details.

While the present article is, to the best of our knowledge, the first systematic treatment of eventual domination between operator semigroups, we note that related notions and results have occasionally occurred in the literature. For instance, \cite[Corollary~4.8.8]{Davies1989} contains an eventual domination result for certain heat kernels. The property of \emph{asymptotic domination} between operator semigroups -- which is clearly related to eventual domination -- has been studied in \cite{Emelyanov2001}, but the focus of that paper was on convergence of the semigroups as $t \to \infty$ rather than on characterisations of eventual or asymptotic domination.

\subsection*{Organisation of the article}

In Section~\ref{section:a-motivating-example} we start with two concrete examples to give some additional motivation for our analysis. In Section~\ref{section:a-characterisation-of-eventual-dommination} we first discuss several preliminaries and then proceed with the statements and proofs of our main results. In the final Section~\ref{section:applications} we give a variety of examples which illustrate how to apply our main results and which give evidence of the prevalence of the phenomenon \emph{eventual domination} in the realm of concrete evolution equations.

\section{Two motivating examples} 
\label{section:a-motivating-example}

\subsection{The heat equation with different types of boundary conditions}
\label{subsection:heat-equation-different-boundary-conditions}

Let us consider the one-dimensional heat equation 
\[
u_t(t,x)=u_{xx}(t,x),\qquad t\ge 0,\ x\in (0,1)
\]
on the interval $(0,1)$ and discuss two different types of boundary conditions:
\begin{itemize}
	\item Mixed Dirichlet and Neumann: $u(t,0)=0$ and $u_x(t,1)=0$ for all $t\ge 0$;
	\item Periodic: $u(t,0)=u(t,1)$ and $u_x(t,0)=u_x(t,1)$ for all $t\ge 0$.
\end{itemize}
The corresponding Cauchy problems on $L^2(0,1)$ are governed by two semigroups: we will denote them by $(e^{t\Delta^M})_{t\ge 0}$ and $(e^{t\Delta^P})_{t\ge 0}$, respectively. Both are strongly continuous, contractive, and positive. We are interested in the following question: does one of these semigroups dominate the other one? 

A handy characterization of domination for semigroups associated with closed forms has been discovered by Ouhabaz in the 1990s: let us recall the following interesting special case~\cite[Theorem~2.21]{Ouh05}.

\begin{prop}\label{prop:ouh-crit}
	Let $(\Omega,\mu)$ be a $\sigma$-finite measure space and let $a,b$ be accretive, continuous, and closed sesquilinear forms with dense domains $D(a),D(b)$ on $L^2 (\Omega, \mu)$. We denote by $(e^{tA} )_{t\ge 0}$, $(e^{tB} )_{t\ge 0}$ their associated semigroups, respectively. Assume $(e^{tB} )_{t\ge 0}$ to be positive. Then $(e^{tA} )_{t\ge 0}$ is dominated by $(e^{tB} )_{t\ge 0}$ if and only if the following two assertions are satisfied:
	\begin{enumerate}[\upshape (a)]
		\item $D(a)$ is an \emph{ideal} of $D(b)$, meaning that
			\begin{itemize}
				\item $u\in D(a)$ implies $|u|\in D(b)$ and
				\item $v\sgn u\in D(a)$ for all $u\in D(a)$ and $v\in D(b)$ that satisfy $|v|\le |u|$;			
			\end{itemize}
		and additionally
		\item $\Real a(u,v)\ge b(|u|,|v|)$ for all $u,v\in D(a)$ in such that $u\overline{v} \ge 0$.
		\end{enumerate}
\end{prop}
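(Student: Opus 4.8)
The plan is to recast the domination between the two semigroups as an \emph{invariance} property of a closed convex cone in a product Hilbert space, and then to appeal to the form criterion for invariance of convex sets under holomorphic semigroups -- a non-symmetric generalisation of the Beurling--Deny criteria, see \cite[Chapter~2]{Ouh05}.

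Concretely, work on $H:=L^2(\Omega,\mu)\times L^2(\Omega,\mu)$ with the sesquilinear form $\mathfrak c\bigl((f_1,g_1),(f_2,g_2)\bigr):=b(f_1,f_2)+a(g_1,g_2)$ and domain $D(\mathfrak c):=D(b)\times D(a)$; since $a,b$ are accretive, continuous and closed, so is $\mathfrak c$, and it generates the semigroup $(e^{tB}\oplus e^{tA})_{t\ge 0}$ on $H$. Let $P$ be the orthogonal projection of $H$ onto the closed convex cone
\[
  C:=\bigl\{(f,g)\in H:\ |g|\le f\ \text{$\mu$-a.e.}\bigr\}.
\]
The first step is that $(e^{tA})_{t\ge 0}$ is dominated by $(e^{tB})_{t\ge 0}$ \emph{if and only if} $C$ is invariant under $(e^{tB}\oplus e^{tA})_{t\ge 0}$: if the domination holds and $(f,g)\in C$, then $|e^{tA}g|\le e^{tB}|g|\le e^{tB}f$ (the last step using positivity of $(e^{tB})$ and $|g|\le f$), so $(e^{tB}f,e^{tA}g)\in C$; conversely, invariance applied to $(|g|,g)\in C$ yields $|e^{tA}g|\le e^{tB}|g|$ for all $g$. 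By the invariance criterion, $C$ is invariant precisely when (i)~$PD(\mathfrak c)\subseteq D(\mathfrak c)$ and (ii)~$\Real\mathfrak c(Pu,u-Pu)\ge 0$ for all $u\in D(\mathfrak c)$. Finally, $P$ acts pointwise: for each $x$ one projects $(f(x),g(x))\in\R\times\bbC$ onto the ``ice-cream cone'' $\{(s,z):|z|\le s\}$, which gives
\[
  Pu=\Bigl(m,\ (|g|\wedge m)\sgn g\Bigr),\qquad m:=f\vee\tfrac{f+|g|}{2}\vee 0
\]
for $u=(f,g)$.

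It remains to match (i) and (ii) with (a) and (b). Positivity of $(e^{tB})$ makes $D(b)$ a sublattice of $L^2$, invariant under conjugation (first Beurling--Deny criterion), so $m\in D(b)$ whenever $|g|\in D(b)$. Testing (i) against $(0,g)$ gives $|g|\in D(b)$ (the first bullet of (a)); testing it against $(2|w|-|u|,u)$ gives $w\sgn u\in D(a)$ for every $0\le w\in D(b)$ with $w\le|u|$, and decomposing an arbitrary $v\in D(b)$ with $|v|\le|u|$ into its positive and negative, real and imaginary parts -- all of which lie in $D(b)$ and are dominated by $|u|$ -- upgrades this to $v\sgn u\in D(a)$ (the second bullet); the reverse implication (a)$\Rightarrow$(i) is read off the formula for $P$. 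For (ii), write $\gamma$ for the second component of $Pu$; then $\mathfrak c(Pu,u-Pu)=b(m,f-m)+a(\gamma,g-\gamma)$, where $f-m$ and $g-\gamma$ vanish outside the ``active'' set $\{|g|>f\}$ and $\gamma,g-\gamma$ are pointwise sign-aligned, whence $\gamma\,\overline{(g-\gamma)}\ge 0$; since (b) addresses precisely pairs $x,y\in D(a)$ with $x\bar y\ge 0$, applying it to $(\gamma,g-\gamma)$ shows (b)$\Rightarrow$(ii), while the converse follows from a pointwise case analysis -- splitting $\{|g|>f\}$ according to the sign of $f$, on which the relations among $m,|\gamma|,|g-\gamma|,f$ take different explicit shapes -- together with a judicious choice of $(f,g)$ and an approximation. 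This equivalence (ii)$\Leftrightarrow$(b) is the technical heart and the main obstacle.

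Finally, I note that the implication ``(a) and (b) $\Rightarrow$ domination'' admits a more hands-on proof bypassing the general invariance theorem. First reduce domination of the semigroups to domination of the resolvents, $|(\lambda-A)^{-1}g|\le(\lambda-B)^{-1}|g|$ for large $\lambda>0$ (Laplace transform one way; the exponential formula $e^{tA}g=\lim_{n\to\infty}(\tfrac nt)^{n}(\tfrac nt-A)^{-n}g$ plus positivity of $(\lambda-B)^{-1}$ the other). Then set $u:=(\lambda-A)^{-1}g$, $w:=(\lambda-B)^{-1}|g|\ge 0$ and $\phi:=(|u|-w)^{+}$; here $|u|\in D(b)$ by (a), so $\phi\in D(b)$, and $0\le\phi\le|u|$, so $\phi\sgn u\in D(a)$ again by (a). Testing the variational identity for $u$ against $\phi\sgn u$ and that for $w$ against $\phi$ and combining them via (b), accretivity, and $\Real b(\psi,\phi)\le 0$ for disjoint $\psi,\phi\ge 0$ (first Beurling--Deny, with $\psi:=(w-|u|)^{+}$) yields $\lambda\norm{\phi}^{2}\le 0$, hence $\phi=0$, i.e.\ $|u|\le w$.
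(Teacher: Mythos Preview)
The paper does not prove Proposition~\ref{prop:ouh-crit}; it merely recalls it as a special case of \cite[Theorem~2.21]{Ouh05} and uses it as a tool. There is therefore no ``paper's own proof'' to compare against.

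Your sketch follows precisely Ouhabaz's original strategy from \cite{Ouh05}: embed both semigroups into the product space, rewrite domination as invariance of the ice-cream cone $C=\{(f,g):|g|\le f\}$ under the product semigroup, and invoke the form criterion for invariance of closed convex sets. The projection formula you give is correct, and your verification of (i)$\Leftrightarrow$(a) via the test pairs $(0,g)$ and $(2w-|u|,u)$, together with the decomposition of a general $v$ into real/imaginary positive/negative parts, is sound. You are honest that (ii)$\Leftrightarrow$(b) is only outlined; the ingredients you list (the sign-alignment $\gamma\,\overline{(g-\gamma)}\ge 0$, the support splitting of $\{|g|>f\}$ according to $\operatorname{sgn} f$, and an approximation) are the right ones, but the actual bookkeeping---in particular how $\Real b(m,f-m)$ combines with $b(|\gamma|,|g-\gamma|)$ on each piece---is not carried out, so this direction remains a genuine sketch rather than a proof. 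Your alternative resolvent argument for ``(a) and (b) $\Rightarrow$ domination'' is correct and is a well-known direct route; note that the Beurling--Deny inequality $\Real b(\psi,\phi)\le 0$ you invoke follows by applying the standard criterion to $-(|u|-w)$ rather than to $|u|-w$.
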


Both generators $\Delta^M$ and $\Delta^P$ are associated with a realization of the quadratic form
\[
	(u,v)\mapsto\int_0^1 u_x(x)\overline{v_x(x)}\dx x,
\]
with form domains
\[
	\{u \in H^1(0,1): \; u(0) = 0\} \quad \text{and} \quad \{u\in H^1(0,1):u(0)=u(1)\},
\]
respectively. Neither of these two form domains is an ideal in the other one, so neither does $(e^{t\Delta^M})_{t\ge 0}$  dominate $(e^{t\Delta^P})_{t\ge 0}$, nor vice versa. 

However, it is not difficult to see that we have a certain type of \emph{eventual domination}: consider a non-zero function  $0 \le f \in L^2(0,1)$. Then $e^{t\Delta^P}f$ converges to $\langle \one, f \rangle \one$ as $t \to \infty$. On the other hand, $\norm{e^{t\Delta^M}f}_\infty \to 0$ as $t \to \infty$, so it follows that $e^{t \Delta^P}f \ge e^{t \Delta^M}f$ for all $t$ from a certain time $t_0$ on.

In this article we pursue two goals:
\begin{enumerate}[(1)]
	\item We formalize the above observations on an abstract level to characterize under which conditions a semigroup $(e^{tB})_{t \ge 0}$ eventually dominates a semigroup $(e^{tA})_{t \ge 0}$;
	
	\item We give sufficient conditions for this eventual domination to be \emph{uniform} in the sense that the time $t_0$ from which on we have $e^{tB}f \ge e^{tA}f$ can be chosen to be independent of $f$.
	
	This uniform eventual domination is indeed satisfied in the above example, as a consequence of Theorem~\ref{thm:main-result-uniform} below (see Subsection~\ref{subsec:1-D-laplace-mixed-periodic} for details).
\end{enumerate}

\subsection{Comparing the heat semigroup at different times} \label{subsection:heat-semigroup-at-different-times}

Let $\Omega \subseteq \bbR^d$ be a bounded domain; here, and throughout the article, we use the notion \emph{domain} shorthand for a non-empty, open and connected set in $\bbR^d$. Assume that $\Omega$ has Lipschitz boundary and let $\Delta^D$ denote the realization of the Dirichlet Laplacian on $L^2(\Omega)$. Then $e^{t\Delta^D}$ converges to $0$ with respect to the operator norm as $t \to \infty$. Moreover, the semigroup generated by twice the Laplacian, $(e^{2t\Delta^D})_{t \in [0,\infty)}$, converges faster to $0$ in the sense that
\begin{align*}
	\frac{\norm{e^{2 t\Delta^D}}}{\norm{e^{t\Delta^D}}} \to 0 \qquad \text{as } t \to \infty.
\end{align*}
Given that we also have $\norm{e^{2 t\Delta^D}} \le \norm{e^{t\Delta^D}}$ for each $t \in [0,\infty)$, it is thus natural to wonder whether a domination property of the type
\begin{align}
	\label{eq:domination-heat-semigroup}
	e^{2 t\Delta^D} \le e^{t\Delta^D}
\end{align}
holds. It follows from a simple and very general result on positive semigroups (see Proposition~\ref{prop:non-decreasing-orbits} below) that~\eqref{eq:domination-heat-semigroup} can in fact \emph{not} hold for all times $t \in [0,\infty)$ since this would imply that each operator $e^{t\Delta^D}$ is a multiplication operator. However, it follows from one of our main results, Theorem~\ref{thm:main-result-uniform}, that~\eqref{eq:domination-heat-semigroup} does indeed hold for all sufficiently large times $t$. In other words, the semigroup $(e^{t\Delta^D})_{t \in [0,\infty)}$ eventually dominates the semigroup $(e^{2t\Delta^D})_{t \in [0,\infty)}$. This will be discussed in more detail -- and in a more general setting -- in Subsection~\ref{subsection:non-monotonicity-of-semigroups}; see in particular Example~\ref{ex:eventual-monotonicity-for-dirichlet-laplacian}.

\section{A characterisation of eventual domination} \label{section:a-characterisation-of-eventual-dommination}

\subsection{Setting the stage}

In this subsection, we briefly discuss a few notions from Banach lattice and operator theory which we need throughout the article.

\subsubsection*{Banach lattices} Throughout we assume that the reader is familiar with the theory of Banach lattices; classical references for this theory are for instance \cite{Schaefer1974} and \cite{Meyer-Nieberg1991}. As a guiding principle one might, in the present article, always think of an $L^p$-space over a $\sigma$-finite measure space; this example class is particularly important in Section~\ref{section:applications} where we apply our results to various differential operators. 

Let $E$ be a complex Banach lattice with positive cone $E_+$. We call the elements of $E_+$ the \emph{positive} vectors in $E$. For two vectors $f,g \in E$ we write $f \le g$ iff $f$ and $g$ are both contained in the real part of $E$ and $g-f \in E_+$; moreover, we use the notation $f < g$ shorthand for $f \le g$ and $f \not= g$. Given a vector $0 < u \in E$ and another vector $f\in E$, we write $f \gg_u 0$ if there exists a number $\varepsilon > 0$ such that $f \ge \varepsilon u$.

The concept of \emph{principal ideals} and of \emph{quasi-interior points} is of particular importance for us. For each $u \in E_+$ we call the set
\begin{align*}
	E_u := \{f \in E: \; \exists c \in [0,\infty) \text{ such that } \modulus{f} \le cu \}
\end{align*}
the \emph{principal ideal} generated by $u$ in $E$; it is indeed an ideal in $E$ in the sense of vector lattices. The ideal $E_u$ becomes itself a complex Banach lattice when endowed with the \emph{gauge norm $\norm{\argument}_u$} given by
\begin{align*}
	\norm{f}_u = \inf \{c \in [0,\infty): \; \modulus{f} \le cu\}.
\end{align*}
The gauge norm is always at least as strong as the norm inherited from $E$. It is instructive to keep the following example in mind: if $E = L^p(\Omega,\mu)$ for a finite measure space $(\Omega,\mu)$ and for $p \in [1,\infty]$, and if $u = \one \in L^p(\Omega,\mu)$, then the principal ideal $E_u = (L^p(\Omega,\mu))_{\one}$ is exactly the space $L^\infty(\Omega,\mu)$, and the gauge norm $\norm{\argument}_u$ is simply the sup norm. In particular, we see in this example that a principal ideal in a Banach lattice might or might no be closed and that the gauge norm might be strictly stronger than or equal to the norm inherited from $E$.

Let again $u \in E_+$. If the principal ideal $E_u$ is dense in $E$, then the vector $u$ is called a \emph{quasi-interior point} of the positive cone $E_+$. For $E = L^p(\Omega,\mu)$, where $(\Omega,\mu)$ is a $\sigma$-finite measure space and where $p \in [1,\infty)$, a vector $0 \le u \in L^p(\Omega,\mu)$ is a quasi-interior point of the positive cone if and only if $u(\omega) > 0$ for almost all $\omega \in \Omega$. On the other hand, a vector $0 \le u \in L^\infty(\Omega,\mu)$ is a quasi-interior point of the positive cone of $L^\infty(\Omega,\mu)$ if and only if there exists a number $\delta > 0$ such that $u \ge \delta \one$.

We call a bounded linear operator $T$ on $E$ \emph{positive}, and denote this by $T \ge 0$, if $TE_+ \subseteq E_+$. For two bounded operators $T,S$ on $E$ we write $T \le S$ if both operators map real elements of $E$ to real elements and if, in addition, $S-T \ge 0$.

\subsubsection*{Operators, spectral theory and $C_0$-semigroups}

Let $E$ be a complex Banach space. We denote the \emph{spectrum} of any operator $A: E \supseteq D(A) \to E$ by $\spec(A)$. For $\lambda \in \bbC \setminus \spec(A)$ we use the notation $\Res(\lambda,A) := (\lambda - A)^{-1}$ for the \emph{resolvent} of $A$ at $\lambda$. The value
\begin{align*}
	\spb(A) := \sup \{\re \lambda: \; \lambda \in \spec(A)\} \in [-\infty,\infty]
\end{align*}
is called the \emph{spectral bound} of $A$, and if $\spb(A) \in \bbR$, then the set
\begin{align*}
	\spec_\per(A) := \spec(A) \cap \big(i\bbR + \spb(A)\big) = \{\lambda \in \spec(A): \; \re \lambda = \spb(A)\}
\end{align*}
is the so-called \emph{peripheral spectrum} of $A$. If an operator $A: E \supseteq D(A) \to E$ generates a $C_0$-semigroup, then we denote this semigroup by $(e^{tA})_{t \ge 0}$.

We denote the dual space of $E$ by $E'$. The dual operator of a densely defined linear operator $A$ on $E$ is denoted by $A'$. For every $f \in E$ and every $\varphi \in E'$ we denote by $f \otimes \varphi$ the bounded linear operator on $E$ given by $(f \otimes \varphi)g = \langle \varphi, g\rangle f$ for all $g \in E$.

\subsubsection*{Real operators}

Let $E$ be a complex Banach lattice. Then $E$ possesses an underlying real Banach lattice $E_\bbR$ which we call the \emph{real part} of $E$. If $E$ is a (complex-valued) $L^p$-space over some measure space, then the real part of $E$ is precisely its subset of real-valued $L^p$-functions.

A linear operator $A: E \supseteq D(A) \to E$ is called \emph{real} if
\begin{align*}
	D(A) = D(A)\cap E_\bbR + i D(A) \cap E_\bbR
\end{align*}
and if $A$ maps $D(A) \cap E_\bbR$ to $E_\bbR$. Consequently, a bounded linear operator $T$ on $E$ is real if and only if it leaves $E_\bbR$ invariant. We call a $C_0$-semigroup $(e^{tA})_{t \ge 0}$ on $E$ \emph{real} if the operator $e^{tA}$ is real for each time $t \ge 0$. It is not difficult to see that a $C_0$-semigroup is real if and only if its generator is real.

\subsubsection*{Eventual positivity}

Here, we recall a few notions from the theory of eventually positive semigroups. Let $E$ be a complex Banach lattice and let $(e^{tA})_{t \ge 0}$ be a $C_0$-semigroup on $E$. There are two relevant notions for us -- mere eventual positivity and eventual strong positivity -- and for each of them, an individual and a uniform version exist. Here are the precise definitions:

\begin{itemize}
	\item \emph{Eventual positivity:} Our semigroup $(e^{tA})_{t \ge 0}$ is called \emph{individually eventually positive} if for each $0 \le f \in E$ there exists a time $t_0 \ge 0$ such that $e^{tA}f \ge 0$ for all $t \ge t_0$.
	
	If, in addition, $t_0$ can be chosen to be independent of $f$, then the semigroup is called \emph{uniformly eventually positive}.
	
	\item \emph{Eventual strong positivity:} Fix a quasi-interior point $u$ of $E_+$. Our semigroup $(e^{tA})_{t \ge 0}$ is called \emph{individually eventually strongly positive with respect to $u$} if for each $0 < f \in E$ there exists a time $t_0 \ge 0$ such that $e^{tA}f \gg_u 0$ for all $t \ge t_0$.
	
	If, in addition, $t_0$ can be chosen to be independent of $f$, then the semigroup is called \emph{uniformly eventually strongly positive with respect to $u$}.
\end{itemize}

For an in-depth study of these notions, and for the discussion of many examples, we refer the reader to the series of papers \cite{Daners2016, DanGluKen16b, Daners2017, DanersPERT, DanersUNIF} and to \cite[Part~III]{GlueckDISS}.

\subsection{Individual eventual domination}

The following theorem is our first main result.

\begin{theorem} \label{thm:main-result-individual}
	Let $E$ be a complex Banach lattice and let $u$ be a quasi-interior point of $E_+$. Consider two distinct real $C_0$-semigroups $(e^{tA})_{t \in [0,\infty)}$ and $(e^{tB})_{t \in [0,\infty)}$ on $E$ which satisfy the following assumptions:
	\begin{itemize}
		\item \emph{Spectral theoretic assumptions:} Both generators $A$ and $B$ have non-empty spectrum and the peripheral spectra $\spec_\per(A)$ and $\spec_\per(B)$ consist of poles of the resolvents $\Res(\argument,A)$ and $\Res(\argument,B)$, respectively;
		\item \emph{Smoothing assumptions:}
		 Both semigroups $(e^{tA})_{t \in [0,\infty)}$ and $(e^{tB})_{t \in [0,\infty)}$ are analytic; moreover, there exists a time $t_0 \in [0,\infty)$ such that $e^{t_0A} E \subseteq E_u$ and $e^{t_0B}E \subseteq E_u$; and
		\item \emph{Positivity assumptions:} The semigroup $(e^{tA})_{t \in [0,\infty)}$ is individually eventually positive, and the semigroup $(e^{tB})_{t \in [0,\infty)}$ is individually eventually strongly positive with respect to $u$.
	\end{itemize}
	Then the following assertions are equivalent:
	\begin{enumerate}[\upshape (i)]
		\item For all $0 < f \in E$ there exists a time $t_1 \in [0,\infty)$ such that $e^{tB}f \ge e^{tA}f \ge 0$ for all $t \ge t_1$, i.e., $(e^{tB})_{t \in [0,\infty)}$ \emph{individually eventually dominates} $(e^{tA})_{t \in [0,\infty)}$.
		\item For all $0 < f \in E$ there exists a time $t_1 \in [0,\infty)$ such that $e^{tB}f \gg_u e^{tA}f \ge 0$ for all $t \ge t_1$.
		\item We have $\spb(B) > \spb(A)$.
	\end{enumerate}
\end{theorem}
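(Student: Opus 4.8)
The plan is to dispose of \textup{(ii)}$\Rightarrow$\textup{(i)} immediately and to treat \textup{(iii)}$\Rightarrow$\textup{(ii)} and \textup{(i)}$\Rightarrow$\textup{(iii)} as the two substantial implications, after collecting the Perron--Frobenius type consequences of the hypotheses from the structure theory of eventually (strongly) positive semigroups. Indeed, \textup{(ii)}$\Rightarrow$\textup{(i)} is clear since $e^{tB}f\gg_u e^{tA}f\ge 0$ forces $e^{tB}f - e^{tA}f\ge\varepsilon u\ge 0$, hence $e^{tB}f\ge e^{tA}f\ge 0$. For the rest one has, under the stated assumptions: $\spb(A)$ is an eigenvalue of $A$ with a positive eigenvector $v_A>0$, and $\spb(A')=\spb(A)$ is an eigenvalue of $A'$ with a positive eigenfunctional $\varphi_A\neq 0$; moreover $\spb(B)$ is a \emph{dominant} eigenvalue of $B$, i.e.\ it is algebraically simple, is the only point of $\spec_\per(B)$, is a first-order pole of $\Res(\argument,B)$, has an eigenvector $v_B\gg_u 0$ and a strictly positive eigenfunctional $\varphi_B$ of $B'$ with $\langle\varphi_B,v_B\rangle=1$, and $e^{-t\spb(B)}e^{tB}\to v_B\otimes\varphi_B$ in operator norm. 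Since the semigroups are analytic, $\spb(A)$ is isolated in $\spec(A)$ with an automatic spectral gap, so that $\norm{e^{tA}}\le C(1+t)^k e^{t\spb(A)}$ for all large $t$ and suitable $C,k$. I would also use the elementary facts that a nonzero positive functional on $E$ does not vanish at a quasi-interior point of $E_+$, and that a strictly positive functional annihilates no nonzero positive vector.

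For \textup{(iii)}$\Rightarrow$\textup{(ii)} I fix $0<f\in E$ and convert the norm asymptotics into order estimates via the smoothing hypothesis. By the closed graph theorem $e^{t_0B}$ is bounded from $E$ to $(E_u,\norm{\argument}_u)$, and feeding the $E$-convergence of the rescaled semigroup through this operator (using $e^{tB}=e^{t_0B}e^{(t-t_0)B}$ and $e^{t_0B}v_B=e^{t_0\spb(B)}v_B$) shows $e^{-t\spb(B)}e^{tB}f\to\langle\varphi_B,f\rangle v_B$ in $\norm{\argument}_u$. Since $\langle\varphi_B,f\rangle>0$ and $v_B\ge\delta u$ for some $\delta>0$, this gives a lower bound $e^{tB}f\ge\varepsilon e^{t\spb(B)}u$ for all large $t$ and some $\varepsilon>0$. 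Boundedness of $e^{t_0A}\colon E\to(E_u,\norm{\argument}_u)$ likewise turns the growth bound into $\norm{e^{tA}f}_u\le C'(1+t)^k e^{t\spb(A)}$ for large $t$; together with $e^{tA}f\ge 0$ eventually (individual eventual positivity of $(e^{tA})_{t\ge 0}$) this yields $e^{tA}f\le C'(1+t)^k e^{t\spb(A)}u$. As $\spb(B)>\spb(A)$, the exponential outgrows the polynomial, so $e^{tB}f-e^{tA}f\ge\big(\varepsilon e^{t\spb(B)}-C'(1+t)^k e^{t\spb(A)}\big)u\gg_u 0$ for all large $t$ while $e^{tA}f\ge 0$; this is \textup{(ii)}.

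For \textup{(i)}$\Rightarrow$\textup{(iii)} I would first test the eventual domination on $f=v_A$: then $e^{tB}v_A\ge e^{tA}v_A=e^{t\spb(A)}v_A$ for large $t$, and pairing with $\varphi_B$ (using $(e^{tB})'\varphi_B=e^{t\spb(B)}\varphi_B$ and $\langle\varphi_B,v_A\rangle>0$) gives $e^{t\spb(B)}\ge e^{t\spb(A)}$ for large $t$, hence $\spb(B)\ge\spb(A)$. Suppose, for contradiction, $\spb(B)=\spb(A)=:\lambda$. Testing $e^{tB}f\ge e^{tA}f$ against $\varphi_A$ for arbitrary $0<f$, using $\langle\varphi_A,e^{tA}f\rangle=e^{t\lambda}\langle\varphi_A,f\rangle$ and $e^{-t\lambda}e^{tB}f\to\langle\varphi_B,f\rangle v_B$, and letting $t\to\infty$, produces $\langle\varphi_B,f\rangle\langle\varphi_A,v_B\rangle\ge\langle\varphi_A,f\rangle$ for all $f\ge 0$; that is, $c\varphi_B-\varphi_A\ge 0$ in $E'$ with $c:=\langle\varphi_A,v_B\rangle>0$. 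Testing this positive functional on $v_B\gg_u 0$ and using $\langle\varphi_B,v_B\rangle=1$ shows it annihilates $v_B$, so $\varphi_A=c\varphi_B$ and $\psi:=\varphi_B$ is a strictly positive functional with $A'\psi=B'\psi=\lambda\psi$. Now for every $0<f$ and all large $t$ one has $e^{tB}f-e^{tA}f\ge 0$ while $\langle\psi,e^{tB}f-e^{tA}f\rangle=e^{t\lambda}\langle\psi,f\rangle-e^{t\lambda}\langle\psi,f\rangle=0$, so strict positivity of $\psi$ forces $e^{tB}f=e^{tA}f$ for all large $t$. As orbits of analytic semigroups are real-analytic on $(0,\infty)$, this equality extends to all $t\ge 0$, and since $\linSpan E_+=E$ we get $e^{tB}=e^{tA}$ for all $t$, contradicting the distinctness of the two semigroups; hence $\spb(B)>\spb(A)$.

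I expect the main difficulties to be twofold. First, one must pin down precisely which structural results for eventually (strongly) positive analytic semigroups are genuinely available under these hypotheses -- above all the algebraic simplicity and dominance of $\spb(B)$, the existence of the positive eigenvectors and eigenfunctionals of $A$, $A'$, $B$, $B'$, and the norm convergence $e^{-t\spb(B)}e^{tB}\to v_B\otimes\varphi_B$; these are imported from the literature rather than reproved here. Second, and more technical, is the passage from $E$-norm convergence and growth bounds to genuine \emph{order} estimates, which is exactly what the smoothing hypothesis $e^{t_0A}E,\,e^{t_0B}E\subseteq E_u$ buys via the closed graph theorem. The equality-forcing step in \textup{(i)}$\Rightarrow$\textup{(iii)} -- excluding $\spb(B)=\spb(A)$ -- is the subtlest single manoeuvre, but it becomes short once the strict positivity of $\varphi_B$ and the asymptotics of $(e^{tB})_{t\ge 0}$ are at hand.
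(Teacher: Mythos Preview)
Your proof is correct, and your argument for \textup{(iii)}$\Rightarrow$\textup{(ii)} is essentially the paper's (modulo the normalisation $\spb(B)=0$, which lets the paper drop the polynomial factor). The interesting divergence is in \textup{(i)}$\Rightarrow$\textup{(iii)}.

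The paper first shows $\spb(A)\le\spb(B)$ by a Laplace-transform/abscissa argument, and then, assuming equality, works hard on the $A$-side: it uses the eventual domination to deduce that $(e^{tA})_{t\ge 0}$ is bounded, hence individually \emph{asymptotically} positive, invokes \cite[Theorem~8.3]{DanGluKen16b} to get strong convergence $e^{tA}\to Q$ with $Q\ge 0$, then shows $P\ge Q$, $Q=QPQ$, $\operatorname{rank} Q=1$, and finally that the $A'$-eigenfunctional $\psi$ coincides with the $B'$-eigenfunctional $\varphi$. Your route is leaner: you take the existence of a positive $A'$-eigenfunctional $\varphi_A$ as a Perron--Frobenius input, test the domination inequality against $\varphi_A$, and pass to the limit using only the (already well-understood) asymptotics of the $B$-semigroup to get $c\varphi_B\ge\varphi_A$. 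This bypasses the entire analysis of the $A$-spectral projection and the $Q=QPQ$ rank argument; the endgame (positive functional annihilating the quasi-interior point $v_B$, hence $\varphi_A=c\varphi_B$, hence $e^{tB}f=e^{tA}f$ eventually, hence identically by analyticity) is then the same as the paper's.

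What each approach buys: the paper is more self-contained in that it \emph{derives} the positive $A'$-eigenfunctional from hypothesis~\textup{(i)} rather than importing it; your version is shorter but leans on that import. Your claim that $\varphi_A$ exists is correct---with $\spb(A)$ a pole and the semigroup individually eventually positive, the leading Laurent coefficient $P_{-m}$ of $\Res(\argument,A)$ at $\spb(A)$ is positive (split the Laplace representation of $\Res(\lambda,A)f$ at $t_f$ and let $\lambda\downarrow\spb(A)$), whence $(P_{-m})'$ is positive with range in $\ker(\spb(A)-A')$---but you should cite or sketch this, since it is precisely the point the paper chooses to work around. Your eigenvector/eigenfunctional test for $\spb(B)\ge\spb(A)$ is also more elementary than the paper's Laplace-transform argument.
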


Assertions~(i) and~(ii) in the above theorem both state in a way that the semigroup $(e^{tB})_{t \in [0,\infty)}$ eventually dominates the semigroup $(e^{tA})_{t \in [0,\infty)}$; we introduced the terminology ``individually eventually dominates'' in the theorem since the time $t_1$ from which on this happens can depend on the initial value $f$.

\begin{remark} \label{rem:necessary-condition-for-domination}
	If we consider two \emph{positive semigroups} $(e^{tA})_{t \in [0,\infty)}$ and $(e^{tB})_{t \in [0,\infty)}$, Theorem~\ref{thm:main-result-individual} shows in particular that, under the various technical assumptions made in the theorem, the property $\spb(A) < \spb(B)$ is a \emph{necessary} condition for domination between $(e^{tB})_{t \in [0,\infty)}$ and $(e^{tA})_{t \in [0,\infty)}$. A classical result of this type for positive semigroups can be found in \cite[Theorem~1.3]{AreBat92}.
	
	This is reminiscent of a classical theme in Perron--Frobenius theory where one considers operators $0 \le S \le T$ with the same spectral radius and proves that this implies $S = T$ under appropriate technical assumptions; see for instance \cite{Gao2013} and the references therein for an overview.
\end{remark}

Before we prove Theorem~\ref{thm:main-result-individual} it is certainly worthwhile to discuss its assumptions in a bit more detail:

\emph{Spectral theoretic assumptions:} The assumption that both spectra $\spec(A)$ and $\spec(B)$ are non-empty is rather mild and only required for technical reasons; besides, it excludes trivial cases such nilpotent semigroups, which are of course eventually dominated by any other semigroup. The assumption is for instance satisfied for most (though not all) differential operators occurring in applications and, in particular, for all self-adjoint operators on Hilbert spaces. The assumption that $\spec_\per(A)$ and $\spec_\per(B)$ consist of poles of the resolvents of $A$ and $B$, respectively, is for instance satisfied if $A$ and $B$ have compact resolvent -- a condition which is satisfied by a large class of differential operators on bounded domains in $\bbR^d$. We note that if $E$ is an $L^p$-space for $p \in [1,\infty)$, or more generally a Banach lattice with order continuous norm, then the smoothing conditions $e^{t_0A} E \subseteq E_u$ and $e^{t_0B}E \subseteq E_u$ imply that the semigroups $(e^{tA})_{t \in [0,\infty)}$ and $(e^{tB})_{t \in [0,\infty)}$ are eventually compact and thus, all spectral values of $A$ and $B$ are poles of their resolvents, respectively \cite[Corollary~2.5]{Daners2017}.

\emph{Smoothing assumptions:} Analyticity is an assumption which is met by many semigroups appearing in applications. In particular, if $E$ is an $L^2$-space and $A$ and $B$ are self-adjoint, then the semigroups generated by $A$ and $B$ are automatically analytic.

The assumptions $e^{t_0A} E \subseteq E_u$ and $e^{t_0B}E \subseteq E_u$ are a bit more subtle. In order to properly understand them, let us first focus on the semigroup $(e^{tB})_{t \in [0,\infty)}$ and let us discuss how $u$ is related to certain spectral properties of the generator $B$. Since $(e^{tB})_{t \in [0,\infty)}$ is assumed to be individually eventually positive, the spectral bound $\spb(B)$ is a spectral value of $B$ according to \cite[Theorem~7.6]{Daners2016}; thus $\spb(B)$ is, again by assumption, a pole of $\Res(\argument,B)$ and therefore even an eigenvalue of $B$ with a positive eigenvector $w$ (see \cite[Theorem~7.7]{Daners2016} or \cite[Corollary~3.3]{Daners2017}). Now we have, on the one hand, $w = e^{-t\spb(B)}e^{tB}w \gg_u 0$ for sufficiently large $t$ and, on the other hand, $w = e^{-t_0\spb(B)}e^{t_0B}w \in e^{t_0B}E \subseteq E_u$, so $u \gg_w 0$. This shows that
\begin{align*}
	c_1 w \le u \le c_2 w
\end{align*}
for two real numbers $c_1,c_2 > 0$. Hence, we may interpret $u$ as a sort of ``deformed'' eigenvector of $B$ for the eigenvalue $\spb(B)$ (see also \cite[Section~3]{DanersUNIF} for a closely related discussion). Note that a similar reasoning as above shows that $\spb(A)$ is a spectral value and a pole of the resolvent for $A$ with a positive eigenvector $v$. We also have $v \in E_u$, so $d_1 v \le u$ for an appropriate number $d_1 > 0$. However, we cannot bound $u$ from above by a multiple of $v$ since we did not assume the individual eventual positivity of $(e^{tA})_{t \in [0,\infty)}$ to be strong with respect to $u$.

Now we elaborate a bit more on the conditions $e^{t_0A}E \subseteq E_u$ and $e^{t_0B}E \subseteq E_u$ themselves. In a prototypical example we have $E = L^p(\Omega,\mu)$ for a finite measure space $(\Omega,\mu)$ and $u = \one_\Omega$. In this case, $E_u = L^\infty$, and the condition $e^{t_0A}E \subseteq E_u$ (and likewise for $e^{t_0B}$) can sometimes by verified by means of kernel estimates (see for instance \cite[Theorem~4.1]{DanersUNIF}). Alternatively, we note that analyticity of $(e^{tA})_{t \in [0,\infty)}$ implies that $e^{t A}E \subseteq D(A^\infty) := \bigcap_{n \in \bbN} D(A^n)$ for every $t>0$, so it suffices to show that $D(A^n) \subseteq E_u$ for at least one $n \in \bbN$ to conclude that $e^{t_0A} \subseteq E_u$ at some (even every) time $t_0 > 0$. The condition $D(A^n) \subseteq E_u$ often comes down to a Sobolev embedding theorem (see the subsection on \emph{The Laplace operator with non-local Robin boundary conditions} of \cite[Section~6]{DanGluKen16b} for several examples of this).

Recall from above that $u$ behaves similarly to the eigenvector $w$ of $B$. Hence, if $B$ is a differential operator on a bounded set $\Omega \subseteq \bbR^d$ subject to, say, Dirichlet boundary conditions, then $u$ vanishes on the boundary of $\Omega$. In order to prove that $D(A^n) \subseteq E_u$ for some $n \in \bbN$ one thus has to show that all functions in $D(A^n)$ vanish at least as fast as $u$ at the boundary of $\Omega$; we refer to \cite[Proposition~6.5]{DanGluKen16b} for an example where this is checked. Also see Subsection~\ref{subsec:elliptic-ops-with-dirichlet-bc} of the present article for a related example.

Finally, we discuss the case where $E$ is a space $C(K)$ of continuous complex-valued functions on a compact Hausdorff space $K$. Then the assumption that $u$ be a quasi-interior point of $E_+$ implies that $u \ge \varepsilon \one_K$ for a real number $\varepsilon > 0$, so we have $E_u = E$ and the assumptions $e^{t_0A}E \subseteq E_u$ and $e^{t_0B}E \subseteq E_u$ are trivial. We note in passing that this is essentially the reason why it is easier to characterise eventually strong positivity on $C(K)$-spaces (as done in \cite{Daners2016}) than on general Banach lattices (as done in \cite{DanGluKen16b}).

\emph{Positivity assumptions:} For the theory of eventually positive semigroups we refer to the series of papers \cite{Daners2016, DanGluKen16b, Daners2017, DanersPERT, DanersUNIF} and to \cite{GlueckDISS}; numerous examples of eventually positive semigroups can be found in \cite[Section~6]{Daners2016}, \cite[Section~6]{DanGluKen16b}, \cite[Section~4]{DanersUNIF} and \cite[Chapter~11]{GlueckDISS}. However, we point out that our characterisation of eventual domination above seems to be new even in the case that both semigroups $(e^{tA})_{t \in [0,\infty)}$ and $(e^{tB})_{t \in [0,\infty)}$ are positive.

\begin{proof}[Proof of Theorem~\ref{thm:main-result-individual}]
	There is no loss of generality in assuming throughout the proof that $\spb(B) = 0$.

	``(i) $\Rightarrow$ (iii)'' We first note that $\spb(A) \le \spb(B)$. Indeed, for $f \in E$, we can represent $\Res(\lambda,B)f$ as Laplace transform of the orbit $t \mapsto e^{tB}f$ whenever $\re \lambda > \spb(B)$; this is a consequence of the eventual positivity of $(e^{tB})_{t \in [0,\infty)}$, see \cite[Proposition~7.1]{Daners2016}. Hence, the eventual domination condition in~(i) implies, for $\re \lambda > \spb(B)$ and $f \ge 0$, that
	\begin{align*}
		\left(\int_0^t e^{-\lambda s}e^{sA} f \dx s\right)_{t \in [0,\infty)}
	\end{align*}
	is a Cauchy net in $E$ and thus convergent. Hence, $\lambda$ is in the resolvent set of $A$ (see e.g.\ \cite[Theorem~II.1.10(i)]{Engel2000}). So indeed, $\spb(A) \le \spb(B)$.
	
	Now, let $\spb(A) = \spb(B) = 0$. We show that this implies $(e^{tA})_{t \in [0,\infty)} = (e^{tB})_{t \in [0,\infty)}$, which contradicts the assumption that both semigroups are distinct.
	
	As the semigroup $(e^{tB})_{t \in [0,\infty)}$ is eventually positive and its generator $B$ has non-empty spectrum, it follows from \cite[Theorem~7.6]{Daners2016} that the spectral bound $\spb(B) = 0$ is a spectral value of $B$. According to \cite[Theorem~5.1]{Daners2017} our assumptions on the semigroup $(e^{tB})_{t \in [0,\infty)}$ now imply that the spectral projection $P$ that corresponds to the eigenvalue $0$ of $B$ satisfies $Pf \gg_u 0$ for all $0 < f \in E$. This in turn implies, according to \cite[Corollary~3.3]{DanGluKen16b}, that $0$ is a first order pole of the resolvent of $B$. 
	
	Hence, the range of $P$ coincides with the kernel of $B$ and the range of the dual operator $P'$ coincides with the kernel of the dual operator $B'$ of $B$. By using again \cite[Corollary~3.3]{DanGluKen16b} we see that both $\ker B$ and $\ker B'$ are one-dimensional, that the first of those spaces is spanned by a vector $v \gg_u 0$ and that the latter space is spanned by a strictly positive functional $\varphi \in E'$. We may choose $v$ and $\varphi$ such that $\langle \varphi, v\rangle = 1$ and such that $\norm{\varphi} = 1$, and then $P$ can be written in the form $P = \varphi \otimes v$.
	
	Since $(e^{tB})_{t \in [0,\infty)}$ is analytic, and thus eventually norm continuous, it follows from our spectral theoretic assumptions that $\spec_\per(B)$ is finite; consequently, the assumptions of \cite[Theorem~5.2]{DanGluKen16b} are satisfied, so this theorem implies that the semigroup $(e^{tB})_{t \in [0,\infty)}$ is bounded. Hence, the semigroup $(e^{tA})_{t \in [0,\infty)}$ is bounded, too, due to the eventual domination condition in assertion~(i). Therefore, the eventual positivity of $(e^{tA})_{t \in [0,\infty)}$ implies that the latter semigroup is \emph{individually asymptotically positive} in the sense of \cite[Definition~8.1(a)]{DanGluKen16b}. The spectral assumptions on $A$ and the eventual norm continuity of $(e^{tA})_{t \in [0,\infty)}$ (which is a consequence of the analyticity) imply that $A$ has only finitely many spectral values on the imaginary axis, so \cite[Theorem~8.3]{DanGluKen16b} now shows that the spectral bound $\spb(A) = 0$ of $A$ is a spectral value of $A$, that the corresponding spectral projection $Q$ is a positive operator and that $e^{tA}$ converges strongly to $Q$ as $t \to \infty$. 
	
	The eventual domination condition in~(i) implies that $P \ge Q \ge 0$. From this we conclude that $Q = QPQ$, since we have
	\begin{align*}
		0 \le QPQ - QQQ = Q(P-Q)Q \le P(P-Q)Q = PQ - PQ = 0;
	\end{align*}
	this argument is taken from \cite[Proposition~2.1.3]{Emelyanov2007}. In particular, the rank of $Q$ is dominated by the rank of $P$. As $P$ has rank $1$ and $Q$ is non-zero, we conclude that $Q$ also has rank $1$. Thus, $Q$ can be written in the form $Q = \psi \otimes w$, where $w \in E$ and $\psi \in E'$ are non-zero vectors. We note that $w$ is a fixed vector of $(e^{tA})_{t \in[0,\infty)}$ and $\psi$ is a fixed vector of the dual semigroup. Since $Q$ is positive, we can choose both $\psi$ and $w$ to be positive, too, and of course we can choose $\psi$ to have norm $1$. Note that $\langle \psi, w\rangle = 1$ since $Q$ is a projection.
	
	Our next goal is to show that $\varphi = \psi$. If we apply the inequality $Q' \le P'$ to the functional $\psi$ we obtain $\psi \le \langle \psi, v\rangle \varphi$. Hence, the functional $\langle \psi, v\rangle \varphi - \psi \in E'$ is positive, and one readily checks that this functional yields $0$ when tested against the vector $v$. Since $v$ is a quasi-interior point of $E_+$ (which follows from $v \gg_u 0$), this implies that actually $\langle \psi, v\rangle \varphi - \psi = 0$. Thus, the functionals $\varphi$ and $\psi$ are linearly dependent; since they are both positive and of norm $1$, we conclude that they actually coincide.
	
	We can use the identity $\varphi = \psi$ to finally show that the semigroups $(e^{tA})_{t \in [0,\infty)}$ and $(e^{tB})_{t \in [0,\infty)}$ coincide. To this end, let $0 < f \in E$ and consider any time $t \ge t_1$, where the $f$-dependent time $t_1 \in [0,\infty)$ is chosen as in assertion~(i). Then $e^{tB}f - e^{tA}f \ge 0$ and, moreover,
	\begin{align*}
		\langle \varphi, e^{tB}f - e^{tA}f \rangle = \langle (e^{tB})'\varphi,f \rangle - \langle (e^{tA})'\psi, f\rangle = \langle \varphi, f\rangle - \langle \psi, f\rangle = 0.
	\end{align*}
	Since the functional $\varphi$ is strictly positive, we conclude that $e^{tB}f - e^{tA}f = 0$. The analyticity of both semigroups now implies that we actually have $e^{tB}f = e^{tA}f$ for all times $t \in (0,\infty)$. Since $f > 0$ was arbitrary and since the positive cone $E_+$ spans $E$ we conclude that both semigroups coincide.
	
	``(iii) $\Rightarrow$ (ii)'' As mentioned in the proof of the previous implication, the assumptions on $B$ imply that \cite[Theorem~5.2]{DanGluKen16b} is applicable. We now use that part~(iii) of this theorem yields strong convergence of $e^{tB}$ to an operator $P$ as $t \to \infty$; by the same theorem, this operator $P$ satisfies $Pf \gg_u 0$ for every $f > 0$. Moreover, it follows from $e^{t_0B}E \subseteq E_u$ and from the closed graph theorem that $e^{t_0B}$ is a bounded operator from the Banach space $E$ to the Banach space $E_u$, where the latter is endowed with the gauge norm $\norm{\argument}_u$ with respect to $u$. Now fix $0 < f \in E$. We then obtain
	\begin{align*}
	\begin{split}
		 \norm{e^{tB}f - Pf}_u& = \norm{e^{t_0B}e^{(t-t_0)B}f - e^{t_0B}Pf}_u \\
		& \le \norm{e^{t_0B}}_{E \to E_u} \norm{e^{(t-t_0)B}f - Pf}_E \to 0 \quad \text{as } t \to \infty;
\end{split}	
	\end{align*}
	for the first equality we used that the range of $P$ coincides with the fixed space of $(e^{tB})_{t \in [0,\infty)}$. We have $Pf \ge \varepsilon u$ for some $\varepsilon > 0$, and we have just shown that $\modulus{e^{tB}f - Pf} \le \frac{\varepsilon}{2} u$ for all sufficiently large $t$, say $t \ge t_2$. Since the semigroup $(e^{tB})_{t \ge 0}$ is real, we conclude that $e^{tB}f \ge \frac{\varepsilon}{2}u$ for all $t \ge t_2$.
	
	On the other hand, the generator $A$ of the analytic semigroup $(e^{tA})_{t \in [0,\infty)}$ has spectral bound $\spb(A) < \spb(B) = 0$, so $e^{tA}$ converges to $0$ with respect to the operator norm as $t\to \infty$ (see e.g.\ \cite[Corollary~IV.3.12]{Engel2000} or \cite[Theorem~5.1.12]{Arendt2011}). Similarly as above we now use that $e^{t_0A}E \subseteq E_u$, which implies that $e^{t_0A}$ is a bounded operator from $E$ to $E_u$ due to the closed graph theorem. Hence,
	\begin{align*}
		\norm{e^{tA}}_{E \to E_u} \le \norm{e^{t_0A}}_{E \to E_u} \norm{e^{(t-t_0)A}}_{E \to E} \to 0 \quad \text{as } t \to \infty.
	\end{align*}
	
	Therefore, there exists a time $t_3 \in [0,\infty)$ such that $\norm{e^{tA}f}_u \le \frac{\varepsilon}{4}$ for all $t \ge t_3$, and thus we have $\modulus{e^{tA}f} \le \frac{\varepsilon}{4}u$ for all $t \ge t_3$. Using that the semigroup $(e^{tA})_{t \in [0,\infty)}$ is real, we thus conclude that $e^{tA}f \le \frac{\varepsilon}{4}u$ for all $t \ge t_3$. 
	
	Consequently, we have $e^{tB}f - e^{tA}f \ge \frac{\varepsilon}{2}u - \frac{\varepsilon}{4}u \gg_u 0$ for all $t \ge t_1 := \max\{t_2,t_3\}$. This proves assertion~(ii).
	
	``(ii) $\Rightarrow$ (i)'' This implication is obvious.
\end{proof}

\begin{remark}
	In the proof of Theorem~\ref{thm:main-result-individual}, the analyticity assumption on the semigroups is only needed in its full strength to employ the identity theorem for analytic mappings at the end of the proof of implication ``(i) $\Rightarrow$ (iii)''. We do not know whether this implication remains true if the analyticity assumption is weakened (for instance, to eventual norm continuity). 
	
	The implication ``(iii) $\Rightarrow$ (ii)'' remains true if we only assume that both semigroups $(e^{tA})_{t \ge 0}$ and $(e^{tB})_{t \ge 0}$ are eventually norm continuous rather than analytic.

	The assumptions $e^{t_0A} E \subseteq E_u$ and $e^{t_0B}E \subseteq E_u$ in Theorem~\ref{thm:main-result-individual} can be omitted for the implication ``(i) $\Rightarrow$ (iii)''. We do not know whether the assumption $e^{t_0B}E \subseteq E_u$ is really needed for the converse implication ``(iii) $\Rightarrow$ (ii)'' to hold; but the following example shows that at least the assumption $e^{t_0A}E \subseteq E_u$ cannot be dropped, in general.
\end{remark}

\begin{example}
	Let $E = L^2(0,\pi)$, let $A$ be the Neumann Laplace operator on $E$ and let $B = \Delta + 2\id_E$ with Dirichlet boundary conditions. Both semigroups $(e^{tA})_{t \in [0,\infty)}$ and $(e^{tB})_{t \in [0,\infty)}$ are positive (thus, in particular, real) and analytic. Moreover, the peripheral spectra $\spec_\per(A) = \{0\}$ and $\spec_\per(B) = \{1\}$ consist of poles of the resolvents of $A$ and $B$, respectively.
	
	Consider the function $\sin \in E$; we have $e^{tB}\sin = e^t \sin$ for each $t \in [0,\infty)$ and, on the other hand, $e^{tA}\sin \gg_{\one} 0$ for each $t \in (0,\infty)$. Hence, $e^{tB}\sin$ does not dominate $e^{tA} \sin$ for any $t \in (0,\infty)$, in spite of the fact that $\spb(B) = 1 > 0 = \spb(A)$. Hence, there cannot be any quasi-interior point $u$ of $E_+$ such that the assumptions of Theorem~\ref{thm:main-result-individual} are satisfied.
	
	It is illuminating to discuss this in a bit more detail for two particular quasi-interior points:
	\begin{itemize}
		\item Let $u = \one$. Then all assumptions of Theorem~\ref{thm:main-result-individual} are satisfied except that $(e^{tB})_{t \in [0,\infty)}$ is not individually eventually strongly positive with respect to $\one$.
		
		\item Let $u = \sin$. Then all assumptions of Theorem~\ref{thm:main-result-individual} are satisfied except that we do not have $e^{t_0A}E \subseteq E_u$ for any $t_0 \in [0,\infty)$.
	\end{itemize}
\end{example}

Under the assumptions of Theorem~\ref{thm:main-result-individual}, if $\spb(A) = \spb(B)$ and $t_0 \in [0,\infty)$, then there exist vectors $0 < f_1,f_2 \in E$ and times $t_1,t_2 \ge t_0$ such that $e^{t_1 A}f_1>e^{t_1 B}f_1$ and $e^{t_2 B}f_2>e^{t_2 A}f_2$. We stress that this does \textit{not} imply lack of eventual domination for individual orbits. Indeed, it may still happen that, for each $f \ge 0$, either the orbit $(e^{tA} f)_{t \ge 0}$ eventually dominates the orbit $(e^{tB}f)_{t \ge 0}$ or vice versa. Let us illustrate this by means of the following example.

\begin{example} \label{ex:cone-splits}
	On the Banach lattice $E = \bbC^2$, let $u = (1, 1)^T$, and consider the projections
	\begin{align*}
		P = \frac{1}{3}
		\begin{pmatrix}
			1 & 2 \\ 1 & 2
		\end{pmatrix}
		\quad \text{and} \quad
		Q = \frac{1}{3}
		\begin{pmatrix}
			2 & 1 \\ 2 & 1
		\end{pmatrix}.
	\end{align*}
	For each vector $0 \le x \in \bbC^2$ we have $Px \ge Qx$ iff $x_2 \ge x_1$ and $Px \le Qx$ iff $x_2 \le x_1$. Now consider the operator semigroups $(e^{tA})_{t \ge 0}$ and $(e^{tB})_{t \ge 0}$ given by
	\begin{align*}
		& e^{tA} := P + e^{-t}(\id-P) = e^{-t}\id + (1-e^{-t})P, \\
		\text{and} \qquad & e^{tB} := Q + e^{-t}(\id-Q) = e^{-t}\id + (1-e^{-t})Q
	\end{align*}
	with generators $A = P - \id$ and $B = Q - \id$, respectively. Both semigroups are strongly positive (in the sense that all entries of $e^{tA}$ and $e^{tB}$ are strictly positive for each $t > 0$), and both generators have spectral bound $0$. Let us show that $E_+$ can be written as the union of two (non-disjoint) cones $C_1,C_2$ such $(e^{tA})_{t\ge 0}$ dominates $(e^{tB})_{t\ge 0}$ on $C_1$, and vice versa on $C_2$.
	
	Indeed, let $0 \le x \in \bbC^2$. If $x_2 \ge x_1$, then $Px \ge Qx$ and thus, $e^{tA}x = e^{-t}e^{tP}x \ge e^{-t}e^{tQ}x = e^{tB}x$ for all $t \ge 0$, so the orbit of $x$ under the semigroup $(e^{tA})_{t \ge 0}$ dominates the orbit of $x$ under the semigroup $(e^{tB})_{t \ge 0}$. If instead $x_2 \le x_1$, then we conversely obtain $e^{tA}x \le e^{tB}x$ for all $t \ge 0$.
\end{example}

The above example is somewhat special in the sense that the positive cone can be written as the union of two cones $C_1$ and $C_2$ such that $(e^{tA})_{t\ge 0}$ eventually dominates (in this particular example even dominates) $(e^{tB})_{t\ge 0}$ on $C_1$, and vice versa on $C_2$. This is a very special situation that cannot be expected in general. Here is a simple counterexample.

\begin{example} \label{ex:cone-does-not-split}
	Consider the orthonormal basis $(u_1,u_2,u_3)$ of $\bbC^3$ given by
	\begin{align*}
		u_1 = \frac{1}{\sqrt{3}}
		\begin{pmatrix}
			1 \\ 1 \\ 1
		\end{pmatrix},
		\quad
		u_2 = \frac{1}{\sqrt{2}}
		\begin{pmatrix}
			-1 \\ 0 \\ 1
		\end{pmatrix},
		\quad 
		u_3 = \frac{1}{\sqrt{6}}
		\begin{pmatrix}
			1 \\ -2 \\ 1
		\end{pmatrix}
	\end{align*}
	und let $U \in \bbC^{3 \times 3}$ be the matrix with columns $u_1$, $u_2$ and $u_3$. Then the operators
	\begin{align*}
		A :=
		U
		\begin{pmatrix}
			0 & 0 & 0 \\
			0 & -1 & 0 \\
			0 & 0 & -1
		\end{pmatrix}
		U^{-1}
		\quad
		\text{and}
		\quad
		B :=
		U
		\begin{pmatrix}
			0 & 0 & 0 \\
			0 & -1 & 1 \\
			0 & -1 & -1
		\end{pmatrix}
		U^{-1}
	\end{align*}
	both have spectral bound $0$; more precisely, 
	\begin{align*}
		\spec(A) = \{0,-1\} 
		\quad \text{and} \quad 
		\spec(B) = \{0,i-1,-i-1\}.
	\end{align*}
	Both semigroups $(e^{tA})_{t \in [0,\infty)}$ and $(e^{tB})_{t \in [0,\infty)}$ are uniformly eventually strongly positive with respect to the vector $u_1$ since both semigroups converge to the projection $u_1u_1^T$; we conclude from Theorem~\ref{thm:main-result-individual} that $(e^{tB})_{t \in [0,\infty)}$ does not eventually dominate $(e^{tA})_{t \in [0,\infty)}$, nor vice versa. 
	
	Moreover, we do not have such a splitting of the positive cone $(\bbC^3)_+$ as in Example~\ref{ex:cone-splits}. To see this, let $x := 2u_1 + u_2 \ge 0$. Then $e^{tA}x = 2u_1 + e^{-t}u_2$ and $e^{tB}x = 2u_1 + e^{-t}(\cos(t) u_2 - \sin(t) u_3)$ for all $t \in [0,\infty)$. In particular, we have $e^{tB}x = 2u_1 + e^{-t} u_3$ whenever $t \in 2\pi\bbZ + \frac{3}{2}\pi$, so we neither have $e^{tB}x \ge e^{tA}x$ nor vice versa for those particular times $t$.
\end{example}

\subsection{Uniform eventual domination}

Our second main result gives criteria for eventual domination where the time $t_1$ from which on $e^{tB}f$ dominates $e^{tA}f$ does not depend on $f$. We formulate the result in the setting of self-adjoint semigroups on Hilbert spaces. 

\begin{theorem} \label{thm:main-result-uniform}
	Let $L^2 := L^2(\Omega,\mu)$ for a $\sigma$-finite measure space $(\Omega,\mu)$ and let $u \in (L^2)_+$ be a function which is strictly positive almost everywhere.
	Consider two distinct real $C_0$-semigroups $(e^{tA})_{t \in [0,\infty)}$ and $(e^{tB})_{t \in [0,\infty)}$ on $L^2$ which satisfy the following assumptions: 
	\begin{itemize}
		\item \emph{Self-adjointness assumptions:} Both operators $A$ and $B$ are self-adjoint;
		\item \emph{Smoothing assumptions:} There exists a time $t_0 \in [0,\infty)$ such that $e^{t_0A}L^2 \subseteq (L^2)_u$ and $e^{t_0B} L^2 \subseteq (L^2)_u$; and
		\item \emph{Positivity assumptions:} The semigroup $(e^{tA})_{t \in [0,\infty)}$ is uniformly eventually positive, and the semigroup $(e^{tB})_{t \in [0,\infty)}$ is uniformly (equivalently: individually) eventually strongly positive with respect to $u$.
	\end{itemize}
	Then the following assertions are equivalent:
	\begin{enumerate}[\upshape (i)]
		\item For all $0 < f \in L^2$ there exists a time $t_1 \in [0,\infty)$ such that $e^{tB}f \ge e^{tA}f \ge 0$ for all $t \ge t_1$.
		\item There exists a time $t_1 \in [0,\infty)$ and a number $\delta > 0$ such that $e^{tB} \ge e^{tA} + \delta u \otimes u \ge e^{tA}$ for all $t \ge t_1$.
		\item We have $\spb(B) > \spb(A)$.
	\end{enumerate}
\end{theorem}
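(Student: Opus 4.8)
The plan is to treat the three assertions as a cycle in which only one genuinely new implication has to be proved. The implication ``(ii) $\Rightarrow$ (i)'' is trivial because $\delta\, u \otimes u \ge 0$. For ``(i) $\Rightarrow$ (iii)'' I would simply invoke Theorem~\ref{thm:main-result-individual}: here both semigroups are analytic (self-adjoint generators), a self-adjoint operator on the non-trivial Hilbert space $L^2$ has non-empty spectrum, and since the norm of $L^2$ is order continuous the smoothing hypotheses force $(e^{tA})_{t \in [0,\infty)}$ and $(e^{tB})_{t \in [0,\infty)}$ to be eventually compact, so all spectral values of $A$ and $B$ are poles of their resolvents; together with the positivity hypotheses (uniform eventual positivity entails the individual version) all assumptions of Theorem~\ref{thm:main-result-individual} hold, and its equivalence ``(i) $\Leftrightarrow$ (iii)'' applies. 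So the real task is ``(iii) $\Rightarrow$ (ii)'', and as usual we may assume $\spb(B) = 0$.

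For ``(iii) $\Rightarrow$ (ii)'' I would first collect the structural facts about $B$ that are already extracted inside the proof of Theorem~\ref{thm:main-result-individual}: $0$ is an isolated pole of $\Res(\argument, B)$, hence---by self-adjointness---a semisimple eigenvalue, so the spectral projection $P$ is the orthogonal projection onto $\ker B$; and eventual strong positivity with respect to $u$ forces $\ker B$ to be one-dimensional, spanned by a vector $v$ with $\varepsilon u \le v \le C u$ for suitable $\varepsilon, C > 0$ (the lower bound from $v \gg_u 0$, the upper bound from $v \in e^{t_0 B}L^2 \subseteq (L^2)_u$). Normalising $\norm{v}_2 = 1$ we obtain $Pf = \langle v, f\rangle v \ge \varepsilon^2 \langle u, f\rangle u$ for all $0 \le f \in L^2$, i.e.\ $P \ge \varepsilon^2\, u \otimes u$. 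Since $\spec(B) \subseteq \{0\} \cup (-\infty, -\gamma]$ for some $\gamma > 0$, the spectral theorem gives $\norm{e^{sB} - P} \le e^{-\gamma s}$; similarly $\spb(A) = -\alpha < 0$ (this is assertion~(iii)) yields $\norm{e^{sA}} \le e^{-\alpha s}$. We may also enlarge $t_0$ so that $e^{t_0A}$ and $e^{t_0B}$ are positive (which leaves $e^{t_0A}L^2 \subseteq (L^2)_u$ and $e^{t_0B}L^2 \subseteq (L^2)_u$ intact), and the closed graph theorem makes both of them bounded as maps from $L^2$ into $(L^2)_u$; let $K$ dominate the two corresponding norms, so that $\modulus{e^{t_0A}h} \le K \norm{h}_2\, u$ and $\modulus{e^{t_0B}h} \le K \norm{h}_2\, u$ for all $h \in L^2$.

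The hard part---and the reason the uniform statement goes beyond the individual one---is to control the error terms $e^{tA}$ and $e^{tB} - P$ in the \emph{order} sense by bounds proportional to $\langle u, f\rangle$ rather than $\norm{f}_2$; the smoothing hypothesis directly yields only $\norm{f}_2$-bounds, which are useless here because $\langle u, f\rangle$ and $\norm{f}_2$ are incomparable on the positive cone. The key observation combines positivity with self-adjointness: if $0 \le f \in L^2$ then $g := e^{t_0A}f \ge 0$, hence $\norm{g}_2^2 = \langle f, e^{t_0A}g\rangle \le K \norm{g}_2 \langle u, f\rangle$ (using $e^{t_0A}g \le K\norm{g}_2\, u$ and $f \ge 0$), so $\norm{e^{t_0A}f}_2 \le K \langle u, f\rangle$; the same bound holds for $e^{t_0B}$. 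With this in hand I would split, for large $t$, $e^{tA} = e^{t_0A}\, e^{(t-2t_0)A}\, e^{t_0A}$: for $0 \le f$ the innermost factor sends $f$ to a positive vector of $L^2$-norm $\le K\langle u, f\rangle$, the middle factor contracts this by $e^{-\alpha(t-2t_0)}$ while keeping it positive, and the outermost factor then gives $e^{tA}f \le K^2 e^{-\alpha(t-2t_0)}\langle u, f\rangle\, u$; hence $0 \le e^{tA} \le c_A(t)\, u\otimes u$ with $c_A(t) \to 0$. Likewise, using $e^{t_0B}P = P e^{t_0B} = P$ (so $P = e^{t_0B} P\, e^{t_0B}$), I would write $e^{tB} - P = e^{t_0B}\big(e^{(t-2t_0)B} - P\big)e^{t_0B}$ and deduce $\modulus{(e^{tB} - P)f} \le c_B(t)\langle u, f\rangle\, u$ with $c_B(t) \to 0$. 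Combining everything, for every $0 \le f \in L^2$ and all sufficiently large $t$,
\begin{align*}
	e^{tB}f - e^{tA}f \ge Pf - \modulus{(e^{tB}-P)f} - e^{tA}f \ge \big(\varepsilon^2 - c_A(t) - c_B(t)\big)\langle u, f\rangle\, u ;
\end{align*}
choosing $t_1$ with $c_A(t) + c_B(t) \le \varepsilon^2/2$ for $t \ge t_1$ and setting $\delta := \varepsilon^2/2$ then gives $e^{tB} \ge e^{tA} + \delta\, u \otimes u \ge e^{tA}$ for all $t \ge t_1$, which is~(ii). I expect the routine points to be the closed-graph boundedness into $(L^2)_u$, the projection identities $e^{t_0B}P = P e^{t_0B} = P$, and the bookkeeping of the various thresholds; the one genuinely delicate step is the self-adjointness-plus-positivity trick that upgrades the $\norm{f}_2$-bound to a $\langle u, f\rangle$-bound.
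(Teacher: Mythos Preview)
Your proof is correct, and your route through ``(iii) $\Rightarrow$ (ii)'' is genuinely different from the paper's. The paper expands $e^{tB}g - e^{tA}g$ in the orthonormal eigenbases of $A$ and $B$, bounds each eigenfunction in the gauge norm by $\norm{f_n}_u \le M e^{t_0\mu_n}$ (and similarly for $e_n$), and then controls the resulting series by first proving that $e^{t_0A}$ and $e^{t_0B}$ are Hilbert--Schmidt (via a majorisation argument due to Schaefer) so that $\sum e^{-2t_0\mu_n}$ and $\sum e^{-2t_0\lambda_n}$ are finite. Your argument bypasses the eigenfunction expansion and the Hilbert--Schmidt machinery entirely: the sandwich factorisation $e^{tA} = e^{t_0A}\,e^{(t-2t_0)A}\,e^{t_0A}$, combined with the self-adjointness identity $\norm{e^{t_0A}f}_2^2 = \langle f, e^{2t_0A}f\rangle \le K\norm{e^{t_0A}f}_2\langle u,f\rangle$, converts the smoothing bound directly into an order estimate $\modulus{e^{tA}f} \le c_A(t)\langle u,f\rangle\, u$. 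This is more operator-theoretic and arguably cleaner; it also makes the mechanism transparent --- one pass through $e^{t_0A}$ trades $\norm{f}_2$ for $\langle u,f\rangle$ (positivity plus self-adjointness), another trades $\norm{\argument}_2$ for $\norm{\argument}_u$ (closed graph), and the middle factor supplies the decay. The paper's approach, by contrast, gives explicit control in terms of the eigenvalue distribution, which is the kind of information one would want for quantitative estimates on $t_1$.

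One harmless slip: your parenthetical remark that the middle factor $e^{(t-2t_0)A}$ ``keeps it positive'' is not guaranteed for all $t \ge 2t_0$ (uniform eventual positivity only gives this once $t-2t_0$ exceeds the positivity threshold), but you never actually use it. The bound $\modulus{e^{t_0A}h} \le K\norm{h}_2\,u$ holds for every $h$, so you obtain $\modulus{e^{tA}f} \le c_A(t)\langle u,f\rangle\,u$ regardless; since $e^{tA}f$ is real, $e^{tA}f \le \modulus{e^{tA}f}$ is all your final inequality needs.
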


In the situation of Theorem~\ref{thm:main-result-uniform} we can also add a fourth assertion to the list of equivalent statements:
\begin{enumerate}[\upshape (iv)]
	\item \textit{There exists a time $t_1 \in [0,\infty)$ such that $e^{tB} \ge e^{tA}$ for all $t \ge t_1$.}
\end{enumerate}
Formally, this assertion is stronger than~(i) and weaker than~(ii), so it is -- under the assumptions of Theorem~\ref{thm:main-result-uniform} -- equivalent to (i)--(iii). We use the terminology ``$(e^{tB})_{t \in [0,\infty)}$ \emph{uniformly eventually dominates} $(e^{tA})_{t \in [0,\infty)}$'' to describe the situation in~(iv).
	
The fact mentioned in the positivity assumptions of Theorem~\ref{thm:main-result-uniform} that uniform and individual eventual strong positivity with respect to $u$ are equivalent for $(e^{tB})_{t \in [0,\infty)}$ is a consequence of the self-adjointness of $B$; see \cite[Corollary~3.5]{DanersUNIF} (and see also \cite[Theorem~10.2.1]{GlueckDISS} for a closely related result). Note that assertions~(i) and~(iii) above are the same as in Theorem~\ref{thm:main-result-individual}, but assertion (ii) is stronger than the corresponding assertion in Theorem~\ref{thm:main-result-individual}.

For the proof of Theorem~\ref{thm:main-result-uniform} we adapt a method that is used to characterise uniform eventual strong positivity of self-adjoint $C_0$-semigroups in \cite[Theorem~10.2.1]{GlueckDISS}. We suspect that, by using techniques from \cite{DanersUNIF}, it should be possible to prove uniform eventual domination results in the spirit of Theorem~\ref{thm:main-result-uniform} above for non-self-adjoint semigroups; however, such results would contain even more technical assumptions involving the dual semigroups of $(e^{tA})_{t\in [0,\infty)}$ and $(e^{tB})_{t \in [0,\infty)}$. Thus, in order to keep our results as comprehensible as possible -- and since we are mainly interested in the case of self-adjoint semigroups in the subsequent section on applications -- we restrict ourselves to the self-adjoint case here.

\begin{proof}[Proof of Theorem~\ref{thm:main-result-uniform}]
	``(ii)~$\Rightarrow$~(i)'' This implication is obvious.
	
	``(i)~$\Rightarrow$~(iii)'' We claim that this implication follows from Theorem~\ref{thm:main-result-individual}; to see this we have to check that all assumptions of this theorem are satisfied. Since $A$ and $B$ are self-adjoint, their spectra are non-empty, and the semigroups $(e^{tA})_{t \in [0,\infty)}$ and $(e^{tB})_{t \in [0,\infty)}$ are analytic. Moreover, since $L^2$ is reflexive, it follows from the smoothing assumptions $e^{t_0A}L^2 \subseteq (L^2)_u$ and $e^{t_0B}L^2 \subseteq (L^2)_u$ that the operators $e^{t_0A}$ and $e^{t_0B}$ are compact \cite[Theorem~2.3(ii)]{Daners2017}. Hence, the semigroups generated by $A$ and $B$ are eventually compact, so all spectral values of $A$ and $B$ are poles of the resolvents of $A$ and $B$, respectively \cite[Corollary~V.3.2(i)]{Engel2000}. Thus, Theorem~\ref{thm:main-result-individual} is applicable.
	
	``(iii)~$\Rightarrow$~(ii)'' There is no loss of generality in assuming that $\spb(B) = 0$, so let $\spb(B) = 0 > \spb(A)$. For the rest of the proof we also assume, for the sake of notational simplicity, that $L^2$ is infinite-dimensional. Over finite-dimensional spaces, we can use the same proof but we have to replace all the infinite eigenvalue expansions in the following with finite sums.
	
	We have already noted above that the semigroups $(e^{tA})_{t \in [0,\infty)}$ and $(e^{tB})_{t \in [0,\infty)}$ are eventually compact and analytic; hence, they are immediately compact (apply the identity theorem for analytic functions in the Calkin algebra over $L^2$) and thus, $A$ and $B$ have compact resolvent according to \cite[Theorem~II.4.29]{Engel2000}. In particular, the domain $D(B)$ (endowed with the graph norm) embeds compactly into $L^2$. Thus, the unit ball of $D(B)$ is a relatively compact subset of $L^2$ and thus separable with respect to the norm on $L^2$. As the span of the unit ball of $D(B)$ is dense in $L^2$ we conclude that $L^2$ is separable. 
	
	Due to the self-adjointness of $A$ and $B$ and, again, due to the compactness of their resolvents, it now follows from the spectral theorem that the eigenvalues of $A$ and $B$ are given by sequences $0 > -\lambda_1 \ge - \lambda_2 \ge \dots$ and $0 = -\mu_0 \ge -\mu_1 \ge -\mu_2 \ge \dots$, respectively, which both converge to $-\infty$; moreover, we can find orthonormal bases $(e_n)_{n \in \bbN}$ and $(f_n)_{n \in \bbN_0}$ of $L^2$ such that each vector $e_n$ is an eigenvector of $A$ for the eigenvalue $-\lambda_n$ and such that each vector $f_n$ is an eigenvector of $B$ for the eigenvalue $-\mu_n$. Note that all vectors $e_n$ and $f_n$ can be chosen to be real-valued functions since the operators $A$ and $B$ are real. Moreover, due to the eventual strong positivity of $(e^{tB})_{t \in [0,\infty)}$ with respect to $u$ it follows, for instance, from \cite[Corollary~3.5]{DanersUNIF} that the eigenspace $\ker B$ is one-dimensional and spanned by a function $w \gg_u 0$; thus, we have $0 = -\mu_0 > -\mu_1$, and the eigenvector $f_0$ can be chosen such that $f_0 \gg_u 0$, i.e., we have $f_0 \ge cu$ for an appropriate number $c > 0$.
	
	The operators $e^{t_0A}$ and $e^{t_0B}$ are continuous from $L^2$ to $(L^2)_u$ as a consequence of the closed graph theorem; let $M$ denote the maximum of their operator norms $\norm{e^{t_0A}}_{L^2 \to (L^2)_u}$ and $\norm{e^{t_0B}}_{L^2 \to (L^2)_u}$. For every $n \in \bbN$ we have $\norm{f_n}_u = e^{t_0\mu_n}\norm{e^{t_0B}f_n}_u$, and likewise for $\norm{e_n}_u$. So
	\begin{align*}
		\norm{f_n}_u \le M e^{t_0 \mu_n} \qquad \text{and} \qquad \norm{e_n}_u \le M e^{t_0 \lambda_n} \qquad \text{for all } n \in \bbN,
	\end{align*}
	and thus, $\modulus{f_n} \le Me^{t_0 \mu_n} u$ and $\modulus{e_n} \le Me^{t_0 \lambda_n}u$ for all $n \in \bbN$. Now fix $0 \le g \in L^2$. For every $t \in [0,\infty)$,
	\begin{align}
		\label{eq:difference-to-be-positive}
		e^{tB}g - e^{tA}g = \langle g,f_0\rangle f_0 + \sum_{n=1}^\infty \left( e^{-t\mu_n} \langle g,f_n\rangle f_n - e^{-t\lambda_n} \langle g,e_n\rangle e_n \right).
	\end{align}
	Let us show that, for sufficiently large $t$, the series on the right is even absolutely convergent with respect to the gauge norm $\norm{\argument}_u$: we observe that
	\begin{align}
		& \sum_{n=1}^\infty \norm{e^{-t\mu_n} \langle g,f_n\rangle f_n - e^{-t\lambda_n} \langle g,e_n\rangle e_n}_u \nonumber \\
		& \le \sum_{n=1}^\infty \left[ e^{-t\mu_n} \langle g,\modulus{f_n}\rangle \norm{f_n}_u + e^{-t\lambda_n} \langle g, \modulus{e_n}\rangle \norm{e_n}_u \right] \nonumber \\
		& \le M^2 \langle g,u \rangle \sum_{n=1}^\infty \left[ e^{(-t + 2t_0)\mu_n} + e^{(-t + 2t_0)\lambda_n} \right]
		\label{eq:series-gauge-norm-estimate}
	\end{align}
	for every $t \in [0,\infty)$. Now we note that $e^{t_0A}$ and $e^{t_0B}$ are Hilbert--Schmidt operators. Indeed, it follows from the assumption $e^{t_0B}L^2 \subseteq (L^2)_u$ that the restriction of $e^{t_0B}$ to the real Hilbert space of all real-valued functions on $L^2$ is a so-called \emph{majorising} operator, see \cite[Definition~IV.3.1]{Schaefer1974}; hence, this operator is Hilbert--Schmidt according to \cite[Theorem~IV.6.9]{Schaefer1974}, which in turn implies that the operator $e^{t_0B}$ on the complex space $L^2$ is Hilbert--Schmidt. By the same reasoning we can see that $e^{t_0A}$ is Hilbert--Schmidt.
	
Hence, we know that the eigenvalues of $e^{t_0A}$ and $e^{t_0B}$ are square summable, i.e., we have $\sum_{n=1}^\infty e^{-2t_0 \lambda_n} < \infty$ and $\sum_{n=0}^\infty e^{-2t_0 \mu_n} < \infty$. In particular, the series in~\eqref{eq:series-gauge-norm-estimate} is finite for all $t \ge 4t_0$; moreover, the number $\sum_{n=1}^\infty \left[ e^{(-t + 2t_0)\mu_n} + e^{(-t + 2t_0)\lambda_n}\right]$ even converges to $0$ as $t\to \infty$ as a consequence of the dominated convergence theorem, so we can find a time $t_1 \ge 4t_0$ such that
	\begin{align*}
		\sum_{n=1}^\infty \left[ e^{(-t + 2t_0)\mu_n} + e^{(-t + 2t_0)\lambda_n}\right] \le \frac{c^2}{2M^2}
	\end{align*}
	for all $t \ge t_1$. We have thus proved that, for all $t \ge 4t_0$, the series in~\eqref{eq:difference-to-be-positive} is absolutely convergent with respect to the gauge norm in $(L^2)_u$ and that its value is thus an element of $(L^2)_u$; moreover, the gauge norm of this value can by estimated as
	\begin{align*}
		\norm{\sum_{n=1}^\infty \left( e^{-t\mu_n} \langle g,f_n\rangle f_n + e^{-t\lambda_n} \langle g,e_n\rangle e_n \right)}_u \le \frac{c^2}{2} \langle g,u\rangle.
	\end{align*}
	for all $t \ge t_1$. Thus, we conclude from~\eqref{eq:difference-to-be-positive} that, for all $t \ge t_1$, 
	\begin{align*}
		e^{tB}g - e^{tA}g \ge \langle g, f_0\rangle f_0 - \frac{c^2}{2} \langle g,u\rangle u \ge c^2 \langle g,u\rangle u - \frac{c^2}{2} \langle g,u\rangle u = \frac{c^2}{2} \langle g,u\rangle u.
	\end{align*}
	Since $t_1$ does not depend on $g$, this proves~(ii) for $\delta = c^2/2$.
\end{proof}

\section{Applications} \label{section:applications}

In this section we discuss several examples where our results can be applied. We point out once again that, to the best of our knowledge, Theorems~\ref{thm:main-result-individual} and~\ref{thm:main-result-uniform} are new even if both semigroups $(e^{tA})_{t \in [0,\infty)}$ and $(e^{tB})_{t \in [0,\infty)}$ are positive. Thus, we also consider examples of positive semigroups in this section.

\subsection{The finite dimensional case}

In the related field of eventual positivity, a large part of the literature is devoted to the study of the finite-dimensional case. For detailed references, we refer for instance to the introduction of the article \cite{Shakeri2017}, and also to the introduction of \cite{Glueck2017} and to \cite[Section~6.4]{GlueckDISS}.

As a connection to the finite-dimensional literature, let us explicitly formulate Theorem~\ref{thm:main-result-individual} in the special case of matrices. We will use the fact that in finite dimensions individual and uniform eventual positivity are equivalent, and -- as can easily be seen -- the same is true for individual and uniform eventual domination. Hence, we use the term \emph{eventual domination} synonymously with \emph{individual eventual domination} and with \emph{uniform eventual domination} in finite dimensions.

\begin{theorem} \label{thm:matrix}
Let $A,B$ be two distinct $n\times n$-matrices with real entries. Assume that:
	\begin{enumerate}[(1)]
		\item The semigroup $(e^{tA})_{t \ge 0}$ is individually (equivalently: uniformly) eventually positive.
		\item $s(B)$ is a dominant and geometrically simple eigenvalue of $B$; moreover, the eigenspaces $\ker(s(B) - B)$ and $\ker(s(B) - B^T)$ contain vectors $x$ and $y$, respectively, such that $x_i > 0$ and $y_i > 0$ for all $i = 1,\dots,n$.
	\end{enumerate}
	Then the following assertions are equivalent:
	\begin{enumerate}[\upshape (i)]
		\item There exists a time $t_1 \in [0,\infty)$ such that $(e^{tB}x)_i \ge (e^{tA}x)_i$ for all $t \ge t_1$, all $0 < x \in \R^n$ and all $i=1,\ldots,n$ (i.e., $(e^{tB})_{t \in [0,\infty)}$ eventually dominates $(e^{tA})_{t \in [0,\infty)}$).
		\item There exists a time $t_1 \in [0,\infty)$ such that $(e^{tB}x)_i > (e^{tA}x)_i$ for all $t \ge t_1$, all $0 < x \in \R^n$ and all $i=1,\ldots,n$.
		\item We have $\spb(B) > \spb(A)$.
	\end{enumerate}
\end{theorem}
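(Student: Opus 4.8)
The plan is to derive Theorem~\ref{thm:matrix} from Theorem~\ref{thm:main-result-individual}, applied with $E = \bbC^n$ equipped with its coordinatewise order and with the quasi-interior point $u = \one := (1,\dots,1)^T$ of $E_+$. The first observation is that in this finite-dimensional setting essentially all of the ``technical'' hypotheses of Theorem~\ref{thm:main-result-individual} come for free: every matrix semigroup is analytic, every matrix has non-empty spectrum, and every spectral value is a pole of the resolvent; the principal ideal $E_u$ is all of $E$, so the smoothing conditions $e^{t_0A}E \subseteq E_u$ and $e^{t_0B}E \subseteq E_u$ hold with $t_0 = 0$; the semigroups are real because $A$ and $B$ are, and they are distinct because the matrices are; and $(e^{tA})_{t \ge 0}$ is individually eventually positive by assumption~(1).

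The one hypothesis that requires an argument is that assumption~(2) forces $(e^{tB})_{t \ge 0}$ to be individually eventually strongly positive with respect to $\one$. Here I would first upgrade \emph{geometric} to \emph{algebraic} simplicity of $\lambda_0 := \spb(B)$: after translating so that $\lambda_0 = 0$, a Jordan chain of length $\ge 2$ would supply a vector $v$ with $Bv = \alpha x$ for some $\alpha \ne 0$, whence $\alpha\, y^T x = y^T B v = (B^T y)^T v = 0$, contradicting $y^T x = \sum_i y_i x_i > 0$. Hence $0$ is a first-order pole of $\Res(\argument, B)$ and its spectral projection is the rank-one operator $P = (y^Tx)^{-1}\, x y^T$, whose entries $x_i y_j / (y^Tx)$ are all strictly positive. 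Since $\lambda_0$ is dominant, $e^{tB}(\Id - P) \to 0$ in operator norm while $e^{tB}P = P$, so $e^{tB} \to P$; in particular, for every $0 < f \in E$ we have $e^{tB}f \to Pf = (y^Tx)^{-1}(y^Tf)\, x \gg 0$ (note $y^T f > 0$), hence $e^{tB}f \gg_{\one} 0$ for all sufficiently large $t$. Thus all hypotheses of Theorem~\ref{thm:main-result-individual} are in place.

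It then remains to put the pieces together. For ``(i)$\,\Rightarrow\,$(iii)'': assertion~(i) of Theorem~\ref{thm:matrix} is (a fortiori) assertion~(i) of Theorem~\ref{thm:main-result-individual}, so the latter theorem yields $\spb(B) > \spb(A)$. The implication ``(ii)$\,\Rightarrow\,$(i)'' is trivial. For ``(iii)$\,\Rightarrow\,$(ii)'' I would argue directly, which also produces the \emph{uniform} time $t_1$: normalising $\spb(B) = 0 > \spb(A)$, we get $e^{tA} \to 0$ and $e^{tB} \to P$ in operator norm, hence $e^{tB} - e^{tA} \to P$; picking $\delta > 0$ with $P_{ij} \ge 2\delta$ for all $i,j$, there is $t_1 \in [0,\infty)$ with $(e^{tB} - e^{tA})_{ij} \ge \delta$ for all $i,j$ and all $t \ge t_1$, whence $(e^{tB}x)_i - (e^{tA}x)_i \ge \delta \sum_j x_j > 0$ for all $0 < x \in \R^n$, all $i$, and all $t \ge t_1$.

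No step is a serious obstacle; the only point requiring a small idea is the ``geometric $\Rightarrow$ algebraic simplicity'' observation, which crucially uses the strict positivity of \emph{both} $x$ and $y$ (the same strict positivity is what makes the entries of $P$ positive and $y^T f$ positive for $0 < f$), and the only thing needing care is the bookkeeping between the coordinatewise order on $\R^n$ and the abstract relation $\gg_{\one}$, which is harmless here since $f \gg_{\one} 0$ simply means that all entries of $f$ are strictly positive.
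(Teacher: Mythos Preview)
Your proposal is correct and follows essentially the same approach as the paper, which presents Theorem~\ref{thm:matrix} as the finite-dimensional specialisation of Theorem~\ref{thm:main-result-individual} and remarks that assumption~(2) is equivalent to eventual strong positivity with respect to $\one$ (citing \cite[Theorem~11.1.2]{GlueckDISS}) and that individual and uniform eventual domination coincide in finite dimensions. You supply explicit self-contained arguments at both of these points---the Jordan-chain computation for algebraic simplicity and the operator-norm convergence $e^{tB}-e^{tA}\to P$ for the uniform ``(iii)$\Rightarrow$(ii)''---which is a welcome elaboration but not a different route.
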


Note that Assumption~(2) is equivalent to eventual strong positivity of the semigroup (with respect to the vector $(1,\dots,1)$) by~\cite[Theorem~11.1.2]{GlueckDISS}; the assumption is in particular satisfied if $B$ generates a positive irreducible semigroup. Assumption~(1) is for instance satisfied if $A$ generates a positive semigroup (i.e., if all off-diagonal entries of $A$ are $\ge 0$).

\subsection{The Laplacian on finite graphs}

Let $A,B$ be two real $n\times n$ matrices that generate positive semigroups. A straightforward application of Ouhabaz' criterion in Proposition~\ref{prop:ouh-crit} shows that $(e^{tB})_{t\ge 0}$ dominates $(e^{tA})_{t\ge 0}$ if and only if $a_{ij}\le b_{ij}$ for all $i,j=1,\ldots,n$. 
As an immediate consequence, if $\mG=(\mV,\mE)$ is a finite connected undirected graph and $\mG'$ is a further graph with same vertex set $\mV'=\mV$, then the semigroup generated by the adjacency matrix of $\mG$ dominates the semigroup generated by the adjacency matrix of $\mG'$ if and only if $\mG'$ is a subgraph of $\mG$. An analogous assertion holds in the case of directed graphs $\mD=(\mV,\vec{\mE})$.

This is no more true if the semigroup generated by the discrete Laplacian is considered, instead. We will now show, as an application of Theorem~\ref{thm:matrix}, that we do not even have eventual domination in this case. Let us consider the advection matrix $\vec{\mathcal L}^\mD$ (sometimes called: ``directed discrete Laplacian'') of a strongly connected directed graph $\mD$, defined as in~\cite[\S~2.1.6]{Mug14}. Such advection matrices are real and satisfy the assumptions of Theorem~\ref{thm:matrix}: indeed, they are positive and irreducible~\cite[Lemma~4.57]{Mug14}, and it is known~\cite[Corollary~2.22]{Mug14} that all their eigenvalues have non-negative real part. Because furthermore ${\bf 1}$ lies in the kernel of any advection matrix and any Laplace matrix, we deduce the following.

\begin{prop}
	\begin{enumerate}[(1)]
		\item If $\mD_1=(\mV,\vec{\mE}_1)$, $\mD_2=(\mV,\vec{\mE}_2)$ are two distinct finite and strongly connected directed graphs, then neither does $(e^{-t\vec{\mathcal L}^{\mD_1}})_{t \in [0,\infty)}$ eventually dominate $(e^{-t\vec{\mathcal L}^{\mD_2}})_{t \in [0,\infty)}$, nor vice versa.

		\item If $\mG_1=(\mV,\mE_1)$, $\mG_2=(\mV,\mE_2)$ are two distinct finite and connected undirected graphs, then neither does $(e^{-t\mathcal L^{\mG_1}})_{t \in [0,\infty)}$ eventually dominate $(e^{-t\mathcal L^{\mG_2}})_{t \in [0,\infty)}$, nor vice versa.

		\item Let $\mG=(\mV,\mE)$ be a finite and connected graph and $\mD=(\mV,\vec{\mE})$ any strongly connected orientation of $\mG$. Then neither does $(e^{-t\mathcal L^\mG})_{t \in [0,\infty)}$ eventually dominate $(e^{-t\vec{\mathcal L}^{\mD}})_{t \in [0,\infty)}$, nor vice versa.
	\end{enumerate}
\end{prop}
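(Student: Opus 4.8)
The plan is to deduce all three assertions from Theorem~\ref{thm:matrix}: every generator that appears in the proposition turns out to have spectral bound $0$, so the condition $\spb(B) > \spb(A)$ of that theorem fails for each ordered pair of them, which rules out eventual domination in either direction.

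The first step is to verify that each of the matrices $-\mathcal L^{\mG}$ (for $\mG$ finite, connected, undirected) and $-\vec{\mathcal L}^{\mD}$ (for $\mD$ finite, strongly connected, directed) satisfies Assumptions~(1) and~(2) of Theorem~\ref{thm:matrix}. For $-\vec{\mathcal L}^{\mD}$ this is immediate from the facts recalled above the statement: $\vec{\mathcal L}^{\mD}$ is positive and irreducible, hence $-\vec{\mathcal L}^{\mD}$ generates a positive irreducible semigroup, and the remark following Theorem~\ref{thm:matrix} then gives both assumptions. For $-\mathcal L^{\mG}$ one notes that $-\mathcal L^{\mG}$ has non-negative off-diagonal entries and is irreducible (because $\mG$ is connected), hence again generates a positive irreducible semigroup, so the same remark applies. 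The second step is to compute the spectral bounds. Since $\mathbf{1} \in \ker \mathcal L^{\mG}$ and $\mathcal L^{\mG}$ is symmetric and positive semi-definite, $\spec(\mathcal L^{\mG}) \subseteq [0,\infty)$ with $0 \in \spec(\mathcal L^{\mG})$, whence $\spb(-\mathcal L^{\mG}) = 0$; and since $\mathbf{1} \in \ker \vec{\mathcal L}^{\mD}$ while, by \cite[Corollary~2.22]{Mug14}, all eigenvalues of $\vec{\mathcal L}^{\mD}$ have non-negative real part, we likewise get $\spb(-\vec{\mathcal L}^{\mD}) = 0$.

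With these preparations, each of the three assertions concerns two \emph{distinct} matrices $A, B$ taken from the above list, each satisfying Assumptions~(1) and~(2), and with $\spb(A) = \spb(B) = 0$. The only case-dependent point is distinctness of the two matrices: in~(1) and~(2) this holds because the advection matrix, respectively the Laplace matrix, of a graph determines that graph, so distinct graphs yield distinct matrices; in~(3) it suffices to observe that $\mathcal L^{\mG}$ is symmetric whereas $\vec{\mathcal L}^{\mD}$ is not, since an orientation of a graph with an edge has no $2$-cycle. One then applies the equivalence of~(i) and~(iii) in Theorem~\ref{thm:matrix} once to the ordered pair $(A,B)$ and once to $(B,A)$; since neither of the two (equal) spectral bounds strictly exceeds the other, neither $(e^{-tB})_{t \ge 0}$ eventually dominates $(e^{-tA})_{t \ge 0}$ nor the other way around, which is the claim in all three cases.

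Since Theorem~\ref{thm:matrix} carries the entire analytic weight, I do not anticipate a real obstacle; the only point demanding a little attention is the spectral-bound bookkeeping — in particular, that $0$ is genuinely a spectral value of each generator (this is where $\mathbf{1}$ being in the kernel is used) and that no eigenvalue of an advection matrix lies in the open left half-plane — so that the hypothesis $\spb(B) > \spb(A)$ of the theorem indeed fails for both orderings.
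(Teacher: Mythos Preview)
Your proposal is correct and follows essentially the same route as the paper: the paper also reduces everything to Theorem~\ref{thm:matrix} by noting that the advection and Laplace matrices generate positive irreducible semigroups and that $\mathbf 1$ lies in each kernel, forcing all spectral bounds to be $0$. You add a little extra care (the explicit distinctness argument in~(3) and the separate treatment of the undirected Laplacian) that the paper leaves implicit, but the strategy is identical.
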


\subsection{The one-dimensional Laplace operator with mixed and with periodic boundary conditions} \label{subsec:1-D-laplace-mixed-periodic}

Here, we briefly revisit the example from Subsection~\ref{subsection:heat-equation-different-boundary-conditions}: let $\Delta^M$ denote the Laplace operator on $L^2(0,1)$ with domain
\begin{align*}
	D(\Delta^M) = \{u \in H^2(0,1): \; u(0) = 0 \text{ and } u_x(1) = 0\}
\end{align*}
(i.e., we have mixed Dirichlet and Neumann boundary conditions), and let $\Delta^P$ denote the Laplace operator on $L^2(0,1)$ with domain
\begin{align*}
	D(\Delta^P) = \{u \in H^2(0,1): \; u(0) = u(1) \text{ and } u_x(0) = u_x(1)\}
\end{align*}
(i.e., we have periodic boundary conditions). Then $\spb(\Delta^M) =-\frac{\pi^2}{4}<0=\spb(\Delta^P)$. Let $u := \one \in L^2(0,1)$ denote the constant function with value $1$. Then the principal ideal $(L^2(0,1))_u$ is the space $L^\infty(0,1)$ and all assumptions of Theorem~\ref{thm:main-result-uniform} are satisfied. Hence, we obtain the following result:

\begin{prop}
	The semigroup $(e^{t\Delta^P})_{t \ge 0}$ does not dominate the semigroup $(e^{t\Delta^M})_{t \ge 0}$, but we have uniform eventual domination in the sense that $e^{t\Delta^P} \ge e^{t \Delta^M}$ for all sufficiently large times $t$.
\end{prop}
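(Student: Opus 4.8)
The plan is to split the statement into its negative half (no domination for all $t\ge 0$) and its positive half (uniform eventual domination for large $t$), handled respectively by Ouhabaz' criterion and by Theorem~\ref{thm:main-result-uniform}. For the negative half I would apply Proposition~\ref{prop:ouh-crit} with $A=\Delta^M$ and $B=\Delta^P$ (so that the positive semigroup $(e^{t\Delta^P})_{t\ge 0}$ is the candidate dominating semigroup). Writing $V_M:=\{u\in H^1(0,1):u(0)=0\}$ and $V_P:=\{u\in H^1(0,1):u(0)=u(1)\}$ for the two form domains, domination of $(e^{t\Delta^M})_{t\ge 0}$ by $(e^{t\Delta^P})_{t\ge 0}$ would force $V_M$ to be an ideal of $V_P$; but any $u\in H^1(0,1)$ with $u(0)=0\ne u(1)$ has $\modulus{u}\in H^1(0,1)$ with $\modulus{u}(0)=0\ne\modulus{u}(1)$, so $\modulus{u}\notin V_P$, contradicting the first ideal condition in Proposition~\ref{prop:ouh-crit}(a). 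This settles the negative half (and merely makes precise the remark in Subsection~\ref{subsection:heat-equation-different-boundary-conditions}).

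For the positive half I would verify all hypotheses of Theorem~\ref{thm:main-result-uniform} with $L^2=L^2(0,1)$, $u=\one$ (so that $(L^2)_u=L^\infty(0,1)$), $A=\Delta^M$ and $B=\Delta^P$, and then read off the equivalent assertion~(iv). The operators are distinct and self-adjoint; being bounded above they generate analytic semigroups, whence $e^{tA}L^2\subseteq D(A)\subseteq H^2(0,1)\subseteq L^\infty(0,1)$ for every $t>0$, and the same for $B$, giving the smoothing inclusions with, say, $t_0=1$. The semigroup $(e^{t\Delta^M})_{t\ge 0}$ is positive (classical, e.g.\ via the Beurling--Deny criteria or Proposition~\ref{prop:ouh-crit}), hence uniformly eventually positive, and $\spb(\Delta^M)=-\pi^2/4<0=\spb(\Delta^P)$, which is assertion~(iii).

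The one step where I expect to have to be slightly careful is the eventual strong positivity of $(e^{t\Delta^P})_{t\ge 0}$ with respect to $\one$. Here $\spb(\Delta^P)=0$ is a simple eigenvalue with eigenfunction $\one$, separated from the rest of $\spec(\Delta^P)$, so $e^{t\Delta^P}f\to\langle\one,f\rangle\one$ in $L^2$ as $t\to\infty$ for every $f\in L^2$; composing this convergence with $e^{t_0\Delta^P}$, which is bounded from $L^2$ into $L^\infty$ by the closed graph theorem, upgrades it to convergence in $L^\infty$ (this is the device already used for ``(iii)$\Rightarrow$(ii)'' in the proofs above). Since $\langle\one,f\rangle>0$ whenever $0<f\in L^2$, we obtain $e^{t\Delta^P}f\ge\frac{1}{2}\langle\one,f\rangle\one\gg_{\one}0$ for all sufficiently large $t$, so $(e^{t\Delta^P})_{t\ge 0}$ is individually --- hence, by self-adjointness of $\Delta^P$ and \cite[Corollary~3.5]{DanersUNIF}, uniformly --- eventually strongly positive with respect to $\one$. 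With all hypotheses of Theorem~\ref{thm:main-result-uniform} verified, its equivalence (i)--(iv) turns assertion~(iii) into assertion~(iv), i.e.\ there is a time $t_1$ with $e^{t\Delta^P}\ge e^{t\Delta^M}$ for all $t\ge t_1$, completing the proof.
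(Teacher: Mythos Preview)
Your proposal is correct and follows essentially the same route as the paper: the negative half via Ouhabaz' ideal criterion (already sketched in Subsection~\ref{subsection:heat-equation-different-boundary-conditions}) and the positive half by checking the hypotheses of Theorem~\ref{thm:main-result-uniform} with $u=\one$ and invoking the spectral bound inequality $\spb(\Delta^M)=-\pi^2/4<0=\spb(\Delta^P)$. The paper itself is terser---it simply asserts that ``all assumptions of Theorem~\ref{thm:main-result-uniform} are satisfied''---whereas you spell out the eventual strong positivity of $(e^{t\Delta^P})_{t\ge 0}$ explicitly; this is fine, though one could shorten it by noting that $(e^{t\Delta^P})_{t\ge 0}$ is in fact positive and irreducible (hence immediately strongly positive with respect to $\one$).
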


\subsection{The one-dimensional Laplace operator with Dirichlet and with non-local boundary conditions} \label{subsec:1-D-laplace-dirichlet-nonlocal}

In this subsection, we consider the Dirichlet Laplace operator $\Delta^D$ on $L^2(0,1)$ with domain
\begin{align*}
	D(\Delta^D) = \{u \in H^2(0,1): \; u(0) = u(1) = 0\}
\end{align*}
and a Laplace operator $\Delta^{NL}$ on $L^2(0,1)$ with non-local boundary conditions whose domain is given by
\begin{align*}
	D(\Delta^{NL}) = \{u \in H^2(0,1): \; u_x(0) = -u_x(1) = u(0) + u(1)\}.
\end{align*}
This is an interesting example of an operator with non-local boundary conditions since $-\Delta^{NL}$ is associated with the form 
\[
	(u,v)\mapsto\int_0^1 u_x(x)\overline{v_x(x)}\dx x + 
	\begin{pmatrix}
		u(0) & u(1)
	\end{pmatrix}
	\begin{pmatrix}
		1 & 1 \\
		1 & 1
	\end{pmatrix}
	\begin{pmatrix}
		\overline{v}(0) \\ \overline{v}(1)
	\end{pmatrix}
\]
on $H^1(0,1)$. The operator $-\Delta^{NL}$ has already occurred on several occassions in the literature as an example for eventual positivity; see \cite[Theorem~6.11]{DanGluKen16b}, \cite[Theorem~11.7.3]{GlueckDISS} and \cite[Example~4.10]{DanersPERT}. In fact, it follows from \cite[Theorem~11.7.3]{GlueckDISS} that the semigroup $(e^{t\Delta^{NL}})_{t \ge 0}$ on $L^2(0,1)$ is not positive, but uniformly eventually strongly positive with respect to $u := \one$. The non-positivity of this semigroup is also discussed in detail in \cite[Section~3]{Akhlil2018}. Now we show that the semigroup eventually dominates the Dirichlet Laplace semigroup $(e^{t\Delta^D})_{t \ge 0}$. We find this example particularly interesting since the dominated semigroup is positive, while the dominating semigroup is only eventually positive.

\begin{theorem}
	There exists a time $t_1 > 0$ such that $e^{t\Delta^{NL}} \ge e^{t\Delta^D}$ for all $t \ge t_1$.
\end{theorem}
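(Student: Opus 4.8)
The plan is to invoke Theorem~\ref{thm:main-result-uniform} with $A = \Delta^D$, $B = \Delta^{NL}$, $L^2 = L^2(0,1)$ and the quasi-interior point $u = \one$. Once the hypotheses of that theorem are verified and the strict inequality $\spb(\Delta^{NL}) > \spb(\Delta^D)$ is established, assertion~(iii) holds, hence so does assertion~(iv), which is precisely the claimed eventual domination.

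Verifying the hypotheses is mostly routine. I would note that $\Delta^D$ is self-adjoint and that $\Delta^{NL}$ is self-adjoint, being the operator associated with the symmetric form on $H^1(0,1)$ displayed above; moreover the two operators are distinct, since $(e^{t\Delta^{NL}})_{t \ge 0}$ is not positive whereas $(e^{t\Delta^D})_{t \ge 0}$ is, and both semigroups are real. For the smoothing assumption I would use that semigroups generated by self-adjoint operators on $L^2$ are analytic, so that $e^{tA}L^2 \subseteq D(A)$ and $e^{tB}L^2 \subseteq D(B)$ for all $t > 0$; since $D(\Delta^D)$ and $D(\Delta^{NL})$ are both contained in $H^2(0,1)$, which embeds continuously into $L^\infty(0,1) = (L^2)_{\one}$, the conditions $e^{t_0A}L^2 \subseteq (L^2)_{\one}$ and $e^{t_0B}L^2 \subseteq (L^2)_{\one}$ hold for every $t_0 > 0$. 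For the positivity assumptions I would use that $(e^{t\Delta^D})_{t \ge 0}$ is positive, hence uniformly eventually positive, and that $(e^{t\Delta^{NL}})_{t \ge 0}$ is uniformly eventually strongly positive with respect to $\one$ by \cite[Theorem~11.7.3]{GlueckDISS}.

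The crux is the comparison of spectral bounds. Here $\spb(\Delta^D) = -\pi^2$, the lowest Dirichlet eigenvalue on $(0,1)$ being $\pi^2$. For $\Delta^{NL}$ I would observe that the associated form, $a(v,v) = \int_0^1 \modulus{v'}^2 \dx x + \modulus{v(0) + v(1)}^2$, is non-negative on $H^1(0,1)$, so that $-\Delta^{NL}$ is a non-negative self-adjoint operator and, by the variational characterisation of the bottom of its spectrum,
\begin{align*}
	-\spb(\Delta^{NL}) \;=\; \inf \spec(-\Delta^{NL}) \;=\; \inf_{0 \ne v \in H^1(0,1)} \frac{a(v,v)}{\norm{v}_{L^2}^2}.
\end{align*}
Testing this infimum against $v(x) = \cos(\pi x) + c$ with an arbitrary constant $c \ne 0$, a short computation gives $a(v,v) = \tfrac{\pi^2}{2} + 4c^2$ and $\norm{v}_{L^2}^2 = \tfrac12 + c^2$, so that
\begin{align*}
	-\spb(\Delta^{NL}) \;\le\; \frac{\pi^2 + 8c^2}{1 + 2c^2} \;<\; \pi^2,
\end{align*}
the last inequality being equivalent to the trivial estimate $8 < 2\pi^2$. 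Hence $\spb(\Delta^{NL}) > -\pi^2 = \spb(\Delta^D)$, so assertion~(iii) of Theorem~\ref{thm:main-result-uniform} is satisfied and the theorem yields the claim.

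The one genuinely computational point, and hence the main (if modest) obstacle, is this strict inequality. The naive test function $\cos(\pi\argument)$ belongs to $D(\Delta^{NL})$ and is in fact an eigenfunction of $-\Delta^{NL}$ for the eigenvalue $\pi^2$, so it only reproduces equality; adding a nonzero constant -- legitimate precisely because the form domain of $\Delta^{NL}$ is all of $H^1(0,1)$ -- breaks this degeneracy and produces the strict inequality.
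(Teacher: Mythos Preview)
Your proof is correct and follows the same overall strategy as the paper: verify the hypotheses of Theorem~\ref{thm:main-result-uniform} for $u=\one$, and then establish the strict spectral bound inequality $\spb(\Delta^{NL}) > \spb(\Delta^D) = -\pi^2$.

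The one genuine difference lies in how the strict inequality is obtained. You argue variationally, testing the Rayleigh quotient against $v = \cos(\pi\,\argument) + c$ and computing that the quotient drops below $\pi^2$ for $c\neq 0$; this is entirely elementary and self-contained. The paper instead argues structurally: it observes that $\cos(\pi\,\argument)$ is an eigenfunction of $\Delta^{NL}$ for $-\pi^2$, so $\spb(\Delta^{NL}) \ge -\pi^2$, and then rules out equality by invoking the eventual strong positivity of $(e^{t\Delta^{NL}})_{t\ge 0}$, which (via \cite[Theorem~5.2 and Corollary~3.3]{DanGluKen16b}) forces the leading eigenspace to be spanned by an almost-everywhere positive function --- which $\cos(\pi\,\argument)$ is not. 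Your route avoids appealing to the Perron--Frobenius-type consequences of eventual positivity and replaces them with a two-line computation; the paper's route avoids any computation by recycling the positivity machinery that is already in play. Both are clean, and your variational argument has the minor advantage of not depending on the cited structural results.
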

\begin{proof}
	We first note that all assumptions of Theorem~\ref{thm:main-result-uniform} are satisfied for $u = \one$ (the smoothing assumption follows from the fact that both semigroups map into $H^1(0,1) \subseteq L^\infty(0,1)$). So it only remains to show that $\spb(\Delta^D) < \spb(\Delta^{NL})$.
	
	It is well-known (and easy to check) that $\spb(\Delta^D) = -\pi^2$. On the other hand, the mapping $(0,1) \ni x \mapsto \cos(\pi x) \in \bbR$ is contained in $D(\Delta^{NL})$ and is thus an eigenvector of $\Delta^{NL}$ for the eigenvalue $-\pi^2$. This shows that $\spb(\Delta^{NL}) \ge -\pi^2$. To see that this inequality is actually strict, we use once again that the semigroup $(e^{t\Delta^{NL}})_{t \ge 0}$ is uniformly eventually strongly positive with respect to $u = \one$ \cite[Theorem~11.7.3]{GlueckDISS}; according to \cite[Theorem~5.2 and Corollary~3.3]{DanGluKen16b} this implies that $\spb(\Delta^{NL})$ is an eigenvalue of $\Delta^{NL}$ whose eigenspace is spanned by a vector which is positive almost everywhere. Hence, this eigenspace cannot contain the function $(0,1) \ni x \mapsto \cos(\pi x) \in \bbC$, so $\spb(\Delta^{NL}) \not= -\pi^2$.
	
	We have thus proved that $\spb(\Delta^D) < \spb(\Delta^{NL})$, so the assertion follows from Theorem~\ref{thm:main-result-uniform}.
\end{proof}

\subsection{Comparison of elliptic operators with Neumann boundary conditions} \label{subsec:elliptic-ops-with-neumann-bc}

Let $\Omega \subseteq \bbR^d$ be a bounded domain with Lipschitz boundary or, more generally, with the \emph{extension property} (i.e., every function in $H^1(\Omega)$ can be extended to a function in $H^1(\bbR^d)$). Let $\calA,\hat \calA: \Omega \to \bbR^{d \times d}$ be measurable and essentially bounded mappings. Assume moreover that there exists a constant $\nu > 0$ such that the ellipticity conditions
\begin{align*}
	\langle \xi, \calA(x) \xi \rangle \ge \nu \norm{\xi}^2 \quad \text{and} \quad \langle \xi, \hat \calA(x) \xi \rangle \ge \nu \norm{\xi}^2 \qquad \text{for all } \xi \in \bbR^d
\end{align*}
hold for almost all $x \in \Omega$.

Then we can define elliptic operators $A^N$ and $\hat A^N$ in divergence form that are associated with the coefficient functions $\calA$ and $\hat \calA$, respectively, and have Neumann boundary conditions. More precisely, we let $-A^N$ and $-\hat A^N$ denote the operators on $L^2(\Omega)$ associated with forms
\begin{align*}
	(u,v) \mapsto \int_\Omega  \calA \nabla u \, \nabla \overline{v} \dx x \quad \text{and} \quad (u,v) \mapsto \int_\Omega \hat{\calA} \nabla u \, \nabla \overline{v} \dx x
\end{align*}
on $H^1(\Omega)$, respectively.

Both operators $A^N$ and $\hat A^N$ generate positive, irreducible, contractive and immediately compact semigroups on $L^2(\Omega)$, their spectral bounds equal $0$, and the eigenspaces $\ker A^N$ and $\ker \hat A^N$ are spanned by the constant function $\one$ with value $1$. As $\Omega$ has the extension property, it follows from ultracontractivity theory (see for instance \cite[Sections~7.3.2 and~7.3.6]{Arendt2004}) that $e^{tA^N}$ and $e^{t\hat A^N}$ map $L^2(\Omega)$ into $L^\infty(\Omega) = (L^2(\Omega))_{\one}$ for each $t > 0$.

As a consequence of Theorem~\ref{thm:main-result-individual}, the semigroup $(e^{tA^N})_{t \ge 0}$ does not eventually dominate the semigroup $(e^{t\hat A^N})_{t \ge 0}$ (nor vice versa), unless the operators $A^N$ and $\hat A^N$ coincide:

\begin{theorem}
	If the semigroup $(e^{t \hat A^N})_{t \ge 0}$ individually eventually dominates the semigroup $(e^{t A^N})_{t \ge 0}$, then the operators $\hat A^N$ and $A^N$ coincide.
\end{theorem}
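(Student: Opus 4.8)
The plan is to argue by contradiction and reduce the statement to Theorem~\ref{thm:main-result-individual}, applied with $A = A^N$, $B = \hat A^N$ and the quasi-interior point $u = \one$. Suppose that $(e^{t\hat A^N})_{t \ge 0}$ individually eventually dominates $(e^{tA^N})_{t \ge 0}$ but that the operators $\hat A^N$ and $A^N$ do not coincide. Then the two semigroups are distinct, so it suffices to check that all the hypotheses of Theorem~\ref{thm:main-result-individual} are satisfied: that theorem will then force $\spb(\hat A^N) > \spb(A^N)$, which is impossible since both spectral bounds equal $0$.

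First I would dispose of the spectral and smoothing hypotheses. Both semigroups are immediately compact, hence $A^N$ and $\hat A^N$ have compact resolvent; in particular their spectra are non-empty (each contains $0$ as an eigenvalue) and consist entirely of poles of the resolvents, so a fortiori the peripheral spectra do. Both operators are elliptic in divergence form and thus generate analytic semigroups. Finally, it was recorded before the statement that $e^{tA^N}$ and $e^{t\hat A^N}$ map $L^2(\Omega)$ into $L^\infty(\Omega) = (L^2(\Omega))_{\one}$ for every $t > 0$, which is precisely the smoothing assumption with $u = \one$ (for any $t_0 > 0$).

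It remains to check the positivity hypotheses. The semigroup $(e^{tA^N})_{t \ge 0}$ is positive, hence trivially individually eventually positive. For $(e^{t\hat A^N})_{t \ge 0}$ we need individual eventual strong positivity with respect to $\one$. Since this semigroup is positive, irreducible and analytic with $\spb(\hat A^N) = 0$ a pole of the resolvent whose eigenspace is spanned by $\one$, its peripheral spectrum is $\{0\}$ and classical Perron--Frobenius theory for positive semigroups shows that $e^{t\hat A^N}$ converges strongly to the spectral projection $P = \one \otimes \varphi$ onto $\ker \hat A^N$, where $0 < \varphi \in L^2(\Omega)'$ is strictly positive. For $0 < f \in L^2(\Omega)$ we then write $e^{t\hat A^N}f = e^{\hat A^N}e^{(t-1)\hat A^N}f$ and use that $e^{\hat A^N}$ is bounded from $L^2(\Omega)$ into $(L^2(\Omega))_{\one}$; hence $e^{t\hat A^N}f \to \langle \varphi, f\rangle \one$ with respect to the gauge norm $\norm{\argument}_{\one}$, and since $\langle \varphi, f\rangle > 0$ we obtain $e^{t\hat A^N}f \ge \tfrac12 \langle \varphi, f\rangle \one \gg_{\one} 0$ for all sufficiently large $t$. (This is the same mechanism as in the proof of the implication ``(iii) $\Rightarrow$ (ii)'' of Theorem~\ref{thm:main-result-individual}.)

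All hypotheses of Theorem~\ref{thm:main-result-individual} being in place, the assumed individual eventual domination yields $\spb(\hat A^N) > \spb(A^N)$, contradicting $\spb(A^N) = \spb(\hat A^N) = 0$. Hence $\hat A^N = A^N$. I expect the only step requiring genuine care to be the verification that $(e^{t\hat A^N})_{t \ge 0}$ is individually eventually strongly positive \emph{with respect to $\one$} (as opposed to merely positive and irreducible); once the convergence of $e^{t\hat A^N}$ to the rank-one projection $P$ is combined with the $L^2 \to L^\infty$ smoothing, this is straightforward, and everything else is bookkeeping with hypotheses already assembled in the text preceding the statement.
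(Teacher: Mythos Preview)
Your proof is correct and follows essentially the same route as the paper: verify the hypotheses of Theorem~\ref{thm:main-result-individual} with $u=\one$, then derive the contradiction $\spb(\hat A^N)>\spb(A^N)=0$. The only difference is cosmetic: the paper establishes individual eventual strong positivity of $(e^{t\hat A^N})_{t\ge 0}$ with respect to $\one$ by citing \cite[Corollary~3.3 and Theorem~5.2]{DanGluKen16b}, whereas you give the underlying argument directly (strong convergence to the rank-one projection combined with the $L^2\to L^\infty$ smoothing to upgrade to gauge-norm convergence).
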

\begin{proof}
	It follows for instance from \cite[Corollary~3.3 and Theorem~5.2]{DanGluKen16b} that both semigroups are individually eventually strongly positive with respect to $\one$. Hence, the assumptions of our Theorem~\ref{thm:main-result-individual} are satisfied. So if $\hat A^N$ and $A^N$ are distinct and if $(e^{t \hat A^N})_{t \ge 0}$ individually eventually dominates $(e^{t A^N})_{t \ge 0}$, then $\spb(\hat A^N) > \spb(A^N)$. But this cannot be true since $\spb(\hat A^N) = 0 = \spb(A^N)$.
\end{proof}

\subsection{Comparison of elliptic operators with Dirichlet boundary conditions} \label{subsec:elliptic-ops-with-dirichlet-bc}

The situation of the previous subsection changes dramatically if we consider Dirichlet instead of Neumann boundary conditions. We consider the same situation as in Subsection~\ref{subsec:elliptic-ops-with-neumann-bc}, with the following three changes:

\begin{itemize}
	\item We assume that $\Omega$ has Lipschitz boundary.
	\item We consider Dirichlet boundary conditions now, i.e., the form domain is no longer $H^1(\Omega)$ but $H^1_0(\Omega)$ instead.
	\item We assume that the matrices $\calA(x)$ and $\hat \calA(x)$ are symmetric for almost every $x \in \Omega$. (This assumption is only needed in order to apply Theorem~\ref{thm:main-result-uniform} instead of Theorem~\ref{thm:main-result-individual}, so that we obtain uniform rather than individual eventual domination.)
\end{itemize}

Denote the corresponding elliptic operators by $A^D$ and $\hat A^D$. Let $0 \le u \in L^2(\Omega)$ and $0 \le \hat u \in L^2(\Omega)$ denote eigenfunctions of $A^D$ and $\hat A^D$ for the eigenvalues $\spb(A^D)$ and $\spb(\hat A^D)$, respectively. Then $u$ and $\hat u$ are strictly positive almost everywhere. We have the following domination result:

\begin{theorem} \label{thm:compare-elliptic-operators-with-dirichlet-bc}
	If $c\hat u \ge u$ for a number $c > 0$, then there exists a number $d > 0$ and a time $t_1 > 0$ such that $e^{t \hat A^D} \ge e^{tdA^D}$ for all $t \ge t_1$.
\end{theorem}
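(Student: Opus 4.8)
The plan is to deduce the statement from Theorem~\ref{thm:main-result-uniform}, applied on $L^2(\Omega)$ with the quasi-interior point $\hat u$ and with the semigroups $(e^{t\hat A^D})_{t\ge 0}$ and $(e^{t(dA^D)})_{t\ge 0}$ playing the roles of $(e^{tB})_{t\ge 0}$ and $(e^{tA})_{t\ge 0}$, respectively, for a sufficiently large number $d>0$. The choice of $d$ is forced by assertion~(iii) of that theorem: since $\Omega$ is bounded with Lipschitz boundary, Poincar\'e's inequality gives $\spb(A^D)<0$ and $\spb(\hat A^D)<0$, and $\spb(dA^D)=d\,\spb(A^D)$, so any $d>\spb(\hat A^D)/\spb(A^D)>0$ achieves $\spb(\hat A^D)>\spb(dA^D)$; this also shows, in passing, that the two semigroups are distinct. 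Fix such a $d$.

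It then remains to verify the remaining hypotheses of Theorem~\ref{thm:main-result-uniform} relative to $\hat u$. Self-adjointness: $A^D$ and $\hat A^D$ are self-adjoint because $\calA$ and $\hat\calA$ are symmetric almost everywhere, hence so is $dA^D$. Positivity: both $A^D$ and $\hat A^D$ generate positive, irreducible $C_0$-semigroups, so $(e^{t(dA^D)})_{t\ge 0}$ is in particular uniformly eventually positive, being positive. For $(e^{t\hat A^D})_{t\ge 0}$ one needs uniform eventual strong positivity with respect to $\hat u$; by \cite[Corollary~3.3 and Theorem~5.2]{DanGluKen16b} it is individually eventually strongly positive with respect to $\hat u$ --- the spectral bound $\spb(\hat A^D)$ is a dominant, simple eigenvalue, its eigenvector $\hat u$ is strictly positive almost everywhere, so the associated spectral projection sends every $0<f\in L^2(\Omega)$ to a strictly positive multiple of $\hat u$, and the smoothing condition established below holds --- and hence, by the self-adjointness of $\hat A^D$, it is uniformly eventually strongly positive with respect to $\hat u$, see \cite[Corollary~3.5]{DanersUNIF}.

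The decisive point is the smoothing condition, i.e., the \emph{ground state domination} bounds $e^{tA^D}L^2(\Omega)\subseteq(L^2(\Omega))_u$ and $e^{t\hat A^D}L^2(\Omega)\subseteq(L^2(\Omega))_{\hat u}$ for every $t>0$. These are instances of intrinsic ultracontractivity for uniformly elliptic divergence-form operators with bounded measurable coefficients under Dirichlet boundary conditions on a bounded Lipschitz domain: concretely, the heat kernels obey $0\le p^{\hat A^D}_t(x,y)\le C_t\,\hat u(x)\hat u(y)$, whence $\modulus{e^{t\hat A^D}f(x)}\le C_t\,\norm{\hat u}_{L^2}\,\norm{f}_{L^2}\,\hat u(x)$, and similarly for $A^D$. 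This is where the hypothesis $c\hat u\ge u$ is used: it gives $(L^2(\Omega))_u\subseteq(L^2(\Omega))_{\hat u}$, so that $e^{t(dA^D)}L^2(\Omega)=e^{(td)A^D}L^2(\Omega)\subseteq(L^2(\Omega))_u\subseteq(L^2(\Omega))_{\hat u}$ for every $t>0$ as well, and the smoothing assumption of Theorem~\ref{thm:main-result-uniform} holds for both semigroups with respect to $\hat u$. With all hypotheses verified, Theorem~\ref{thm:main-result-uniform} --- together with the additional equivalent assertion~(iv) recorded just after its statement --- yields a time $t_1>0$ with $e^{t\hat A^D}\ge e^{t(dA^D)}$ for all $t\ge t_1$, which is the claim. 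The main obstacle in making this rigorous is the justification of the ground state domination bounds at the stated level of generality: for the pure Dirichlet Laplacian this is classical (Davies--Simon), and for symmetric uniformly elliptic operators with merely bounded measurable coefficients on Lipschitz domains it is available in the literature, but it must be cited at the appropriate level of generality.
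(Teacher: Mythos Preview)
Your proposal is correct and follows essentially the same route as the paper: choose $d$ so that $\spb(\hat A^D)>\spb(dA^D)$, verify the smoothing condition via ground state domination and the inclusion $(L^2(\Omega))_u\subseteq(L^2(\Omega))_{\hat u}$ coming from $c\hat u\ge u$, obtain uniform eventual strong positivity of $(e^{t\hat A^D})_{t\ge0}$ with respect to $\hat u$ via \cite[Corollary~3.5]{DanersUNIF}, and apply Theorem~\ref{thm:main-result-uniform}. The ``main obstacle'' you flag is dispatched in the paper by citing \cite[Lemma~2]{Davies1987}, which provides exactly the required intrinsic ultracontractivity bound for symmetric uniformly elliptic operators with bounded measurable coefficients on bounded Lipschitz domains.
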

\begin{proof}
	Choose $d > 0$ such that $\spb(\hat A^D) > \spb(d A^D)$. Since $\Omega$ has Lipschitz boundary, it follows from \cite[Lemma~2]{Davies1987} that the operators $e^{t\hat A^D}$ and $e^{td A^D}$ map $L^2(\Omega)$ into the principal ideals $(L^2(\Omega))_{\hat u}$ and $(L^2(\Omega))_{u} \subseteq (L^2(\Omega))_{\hat u}$, respectively. Hence, the semigroup $(e^{t\hat A^D})_{t \ge 0}$ is uniformly eventually strongly positive with respect to $\hat u$ according to \cite[Corollary~3.5]{DanersUNIF} and therefore, we can apply Theorem~\ref{thm:main-result-uniform} to conclude that $e^{t \hat A^D} \ge e^{td A^D}$ for all sufficiently large times $t$.
\end{proof}

Let the function $\delta \in L^2(\Omega)$ be given by
\begin{align*}
	\delta(x) = \dist(x,\partial \Omega)
\end{align*}
for all $x \in \Omega$. If $\Omega$ has sufficiently smoothing boundary and the coefficients $\calA$ and $\hat \calA$ are sufficiently smooth, it follows that the eigenfunctions $u$ and $\hat u$ are both bounded above and below by strictly positive multiples of $\delta$ (see for instance \cite[Section~3]{Davies1987}). Hence, the assumption of Theorem~\ref{thm:compare-elliptic-operators-with-dirichlet-bc} that $c \hat u \ge u$ for some $c > 0$ is satisfied in this situation, and so is the converse inequality $\tilde c u \ge \hat u$ for some $\tilde c > 0$. Thus, in this situation Theorem~\ref{thm:compare-elliptic-operators-with-dirichlet-bc} implies the existence of numbers $b_1,b_2 > 0$ such that
\begin{align*}
	e^{b_1 t \hat A^D} \le e^{t A^D} \le e^{b_2 t \hat A^D}
\end{align*}
for all sufficiently large times $t$.

\subsection{Squares of generators}

This subsection is loosely inspired by \cite[Subsection~6.3]{Daners2016} and \cite[Theorem~6.1]{DanGluKen16b}. We show how our results can be used to derive eventual domination between a semigroup $(e^{tA})_{t \in [0,\infty)}$ and the semigroup generated by the square $A^2$ of $A$. We formulate our result in the setting of self-adjoint operators on Hilbert spaces.

\begin{theorem} \label{thm:square-of-generator}
	Let $(\Omega,\mu)$ be a $\sigma$-finite measure space, set $L^2 := L^2(\Omega,\mu)$ and let $0 \le u \in L^2$ be a function such that $u(\omega) > 0$ for almost all $\omega \in \Omega$. Consider a real and self-adjoint operator $A: L^2 \supseteq D(A) \to L^2$ which satisfies the following two assumptions:
	\begin{itemize}
		\item \emph{Smoothing assumption:} There exists a time $t_0 \in [0,\infty)$ such that $e^{t_0A}L^2 \subseteq (L^2)_u$.
		\item \emph{Spectral assumptions:} The spectral bound $\spb(A)$ satisfies $\spb(A) \le 0$; moreover, $\spb(A)$ is a geometrically simple eigenvalue of $A$ and the eigenspace $\ker(\spb(A) - A)$ is spanned by a vector $v \gg_u 0$.
	\end{itemize}
	Then both semigroups $(e^{tA})_{t \in [0,\infty)}$ and $(e^{-tA^2})_{t \in [0,\infty)}$ are uniformly eventually strongly positive with respect to $u$. Moreover, those semigroups exhibit the following behaviour:
	\begin{enumerate}[\upshape (a)]
		\item If $\spb(A) \in \{-1, 0\}$ and $-A^2 \not= A$, then $(e^{-tA^2})_{t \ge 0}$ does not individually eventually dominate $(e^{tA})_{t \ge 0}$, nor vice versa.
		\item If $\spb(A) \in (-1,0)$, then there exists a time $t_1 \in [0,\infty)$ such that $e^{-tA^2} \ge e^{tA}$ for all  $t \ge t_1$.
		\item If $\spb(A) \in (-\infty,-1)$, then there exists a time $t_1 \in [0,\infty)$ such that $e^{tA} \ge e^{-tA^2}$ for all $t \ge t_1$.
	\end{enumerate}
\end{theorem}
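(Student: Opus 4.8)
The plan is to diagonalise $A$ with the spectral theorem, to deduce the relevant spectral and smoothing data for $-A^2$ from those of $A$, and then to obtain (a)--(c) by applying Theorem~\ref{thm:main-result-uniform} to the ordered pairs $(A,-A^2)$ and $(-A^2,A)$; the first assertion of the theorem will fall out along the way. To set up, note that $A$ is self-adjoint with $\spb(A) \le 0$, hence $\spec(A) \subseteq (-\infty,0]$, so $A$ and $-A^2$ are real self-adjoint operators generating bounded analytic $C_0$-semigroups. Exactly as in the proof of Theorem~\ref{thm:main-result-uniform} --- using the reflexivity of $L^2$ together with \cite[Theorem~2.3(ii)]{Daners2017} and \cite[Theorem~IV.6.9]{Schaefer1974} --- the smoothing assumption yields that $e^{t_0A}$ is compact and Hilbert--Schmidt, that $L^2$ has an orthonormal basis $(e_n)_{n \in \bbN_0}$ of real eigenvectors of $A$ with eigenvalues $\spb(A) = \alpha_0 \ge \alpha_1 \ge \cdots \to -\infty$, and that $\sum_n e^{2t_0\alpha_n} < \infty$; moreover $\alpha_0$ is simple by hypothesis and we may take $e_0 = v/\norm{v}_2 \gg_u 0$. (In finite dimensions all sums below are finite and the argument simplifies accordingly.)

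Next I transfer everything to $-A^2$, which acts diagonally on the same basis with eigenvalues $-\alpha_n^2$. Since $\alpha_n \le \alpha_0 \le 0$ implies $\alpha_n^2 \ge \alpha_0^2$, we get $\spb(-A^2) = -\alpha_0^2$; and $\alpha_n^2 = \alpha_0^2$ forces $\alpha_n = \alpha_0$, so the eigenspace of $-A^2$ for $\spb(-A^2)$ equals $\ker(\alpha_0 - A) = \linSpan\{v\}$. Hence $\spb(-A^2)$ is a geometrically simple eigenvalue of $-A^2$ with eigenvector $v \gg_u 0$. For the smoothing property of $-A^2$, set $M := \norm{e^{t_0A}}_{L^2 \to (L^2)_u}$, so $\norm{e_n}_u = e^{-t_0\alpha_n}\norm{e^{t_0A}e_n}_u \le M e^{-t_0\alpha_n}$; then for every $s > 0$ and $f = \sum_n c_n e_n \in L^2$ the Cauchy--Schwarz inequality gives
\begin{align*}
	\norm{e^{-sA^2}f}_u \;\le\; \sum_n \modulus{c_n}\, e^{-s\alpha_n^2}\norm{e_n}_u \;\le\; M\norm{f}_2 \Big( \sum_n e^{-2s\alpha_n^2 - 2t_0\alpha_n} \Big)^{1/2},
\end{align*}
and the last series converges because $-2s\alpha_n^2 - 2t_0\alpha_n \le 2t_0\alpha_n$ once $s\modulus{\alpha_n} \ge 2t_0$, so that its tail is dominated by that of $\sum_n e^{2t_0\alpha_n} < \infty$. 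Thus $e^{-sA^2}L^2 \subseteq (L^2)_u$ for every $s > 0$, i.e.\ $-A^2$ has the smoothing property.

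Now the conclusions follow. By the characterisation of uniform eventual strong positivity for real self-adjoint semigroups --- the smoothing property together with the spectral bound being a geometrically simple eigenvalue that admits an eigenvector $\gg_u 0$ --- contained in \cite[Corollary~3.5]{DanersUNIF} (see also \cite[Theorem~10.2.1]{GlueckDISS}), both $(e^{tA})_{t\ge 0}$ (by hypothesis) and $(e^{-tA^2})_{t\ge 0}$ (by the previous paragraph) are uniformly eventually strongly positive with respect to $u$, and in particular uniformly eventually positive; this is the first assertion. For the dichotomy, put $\alpha_0 := \spb(A) \le 0$, so that $\spb(-A^2) - \spb(A) = -\alpha_0^2 - \alpha_0 = -\alpha_0(\alpha_0 + 1)$ is positive for $\alpha_0 \in (-1,0)$, zero for $\alpha_0 \in \{-1,0\}$, and negative for $\alpha_0 < -1$. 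In case~(b) the two semigroups are distinct (otherwise $-A^2 = A$ would force $\spec(A) \subseteq \{0,-1\}$, contradicting $\alpha_0 \in (-1,0)$) and $\spb(-A^2) > \spb(A)$, so all hypotheses of Theorem~\ref{thm:main-result-uniform} are met for the pair with $B := -A^2$ and dominated operator $A$, and its implication ``(iii)$\Rightarrow$(ii)'' yields $e^{-tA^2} \ge e^{tA}$ for all large $t$; case~(c) is the same with the roles of $A$ and $-A^2$ interchanged, using $\spb(A) > \spb(-A^2)$. In case~(a) the semigroups are distinct by assumption while $\spb(-A^2) = \spb(A)$, so applying Theorem~\ref{thm:main-result-uniform} to both ordered pairs and using the equivalence (i)$\Leftrightarrow$(iii) with (iii) failing shows that neither semigroup individually --- hence neither uniformly --- eventually dominates the other.

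The single genuinely technical step is the smoothing property for $-A^2$ --- specifically the convergence of $\sum_n e^{-2s\alpha_n^2 - 2t_0\alpha_n}$ --- which really requires the Hilbert--Schmidt bound $\sum_n e^{2t_0\alpha_n} < \infty$ rather than merely discreteness of $\spec(A)$ (otherwise the eigenvalues could tend to $-\infty$ too slowly for the series to converge); this is why I extract that bound at the outset. Everything else amounts to checking the hypotheses of Theorem~\ref{thm:main-result-uniform}.
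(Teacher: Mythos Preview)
Your proof is correct and follows the same overall plan as the paper: verify the spectral and smoothing hypotheses of Theorem~\ref{thm:main-result-uniform} for $-A^2$, invoke \cite[Corollary~3.5]{DanersUNIF} for the uniform eventual strong positivity of both semigroups, and then read off (a)--(c) from the sign of $\spb(-A^2)-\spb(A)=-\alpha_0(\alpha_0+1)$.

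The one place where you diverge from the paper is the smoothing property $e^{-sA^2}L^2\subseteq (L^2)_u$. You obtain it via an explicit eigenfunction expansion, bounding the series $\sum_n e^{-2s\alpha_n^2-2t_0\alpha_n}$ by the Hilbert--Schmidt sum $\sum_n e^{2t_0\alpha_n}$. The paper instead notes in one line that, by the functional calculus, the range of $e^{-t_0A^2}$ is contained in the range of $e^{t_0A}$: indeed $e^{-t_0A^2}=e^{t_0A}\,h(A)$ with $h(\lambda)=e^{-t_0\lambda(\lambda+1)}$, and $h$ is bounded on $(-\infty,0]$ (its maximum is $e^{t_0/4}$ at $\lambda=-1/2$), so $h(A)$ is bounded and $e^{-t_0A^2}L^2\subseteq e^{t_0A}L^2\subseteq (L^2)_u$. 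This bypasses the eigenvalue summability altogether, so your closing remark that the Hilbert--Schmidt bound is ``really required'' for this step overstates matters; it is sufficient but not necessary. Your route has the minor advantage of giving the smoothing property for \emph{every} $s>0$ rather than just $t_0$, but that extra information is not used anywhere.
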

\begin{proof}
	First we note that it follows, for instance, from the spectral theorem that the range of $e^{-t_0A^2}$ is contained in the range of $e^{t_0A}$, so we know that $e^{-t_0A^2} L^2 \subseteq (L^2)_u$. Next, we observe that $\spb(-A^2) = -\spb(A)^2$ and hence,
	\begin{align*}
		\ker\left(\spb(-A^2) - (-A^2)\right) = \ker\left(A^2 - \spb(A)^2\right) = \ker(\spb(A) - A);
	\end{align*}
	for $\spb(A) < 0$ the second equality follows from the fact that $-\spb(A) > 0$ is not an eigenvalue of $A$, and for $\spb(A) = 0$ the second equality follows from the fact that every eigenvalue of a self-adjoint operator is semi-simple (i.e., its algebraic multiplicity coincides with its geometric multiplicity).
	
	We conclude that the spectral bound of $-A^2$ is a geometrically simple eigenvalue of $-A^2$ and that the corresponding eigenspace is spanned by the vector $v \gg_u 0$. We can now employ the characterization result from \cite[Corollary~3.5]{DanersUNIF} which tells us that both semigroups $(e^{tA})_{t \in [0,\infty)}$ and $(e^{-tA^2})_{t \in [0,\infty)}$ are uniformly eventually strongly positive with respect to $u$.
	
	Assertion~(a) now follows from Theorem~\ref{thm:main-result-uniform}. Assertions~(b) and~(c) also follow from this theorem since we have $\spb(-A^2) > \spb(A)$ in~(b) and $\spb(A) > \spb(-A^2)$ in~(c).
\end{proof}

We note in passing that the assumption $-A^2 \not= A$ in assertion~(a) of the above theorem is not redundant: consider for instance the case $L^2 = \bbC^2$ and
\begin{align*}
	A =
	\frac{1}{2}
	\begin{pmatrix}
		-1 & 1 \\ 1 & -1
	\end{pmatrix}.
\end{align*}

Then all assumptions of Theorem~\ref{thm:square-of-generator} are satisfied (for $u = (1,1)^T$) and we have $\spb(A) = 0$ and $-A^2 = A$. 

On the other hand, as the assumptions of Theorem~\ref{thm:square-of-generator} imply that $A$ has compact resolvent, we always have $-A^2 \not= A$ if $L^2$ is infinite-dimensional (and also if $L^2$ is finite-dimensional and the spectrum of $A$ is not a subset of $\{-1,0\}$).

As an example for Theorem~\ref{thm:square-of-generator} we consider the Laplace operator $\Delta^P$ and the bi-Laplace operator on $L^2(0,1)$, both with periodic boundary conditions. The latter operator is precisely given as $-(\Delta^P)^2$ (note that this does not work, in general, for other types of boundary conditions). As a consequence of Theorem~\ref{thm:square-of-generator}(a) (for $u = \one$) we obtain the following result:

\begin{prop}
	The semigroups $(e^{t\Delta^P})_{t \ge 0}$ and $(e^{-t(\Delta^P)^2})_{t\ge 0}$ are uniformly eventually strongly positive with respect to $\one$, but neither of those two semigroup dominates the other one individually eventually.
\end{prop}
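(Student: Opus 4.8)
The plan is to deduce the proposition directly from Theorem~\ref{thm:square-of-generator}(a), applied with $A = \Delta^P$ and $u = \one \in L^2(0,1)$. The one preliminary point is the identification, already mentioned above, of the bi-Laplace operator with periodic boundary conditions with $-(\Delta^P)^2$: since $\Delta^P$ is diagonalised by the trigonometric system $\{1,\sqrt2\cos(2\pi k\,\cdot),\sqrt2\sin(2\pi k\,\cdot): k\ge 1\}$ with eigenvalues $-4\pi^2 k^2$, the operator $(\Delta^P)^2$ is diagonalised by the same system with eigenvalues $16\pi^4 k^4$, and one checks that its domain coincides with the natural domain $\{u\in H^4(0,1): u,u_x,u_{xx},u_{xxx}\text{ agree at }0\text{ and }1\}$ of the periodic bi-Laplacian.

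Next I would verify the hypotheses of Theorem~\ref{thm:square-of-generator}. The operator $\Delta^P$ is real and self-adjoint on $L^2(0,1)$. For the smoothing assumption, analyticity of $(e^{t\Delta^P})_{t\ge 0}$ gives $e^{t\Delta^P}L^2(0,1)\subseteq D(\Delta^P)\subseteq H^2(0,1)\hookrightarrow L^\infty(0,1)=(L^2(0,1))_{\one}$ for every $t>0$, so the smoothing assumption holds with any $t_0>0$. For the spectral assumptions, $\spb(\Delta^P)=0\le 0$; the eigenvalue $0$ is geometrically (indeed algebraically) simple with eigenspace $\bbC\one$; and clearly $\one\gg_{\one}0$.

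Therefore Theorem~\ref{thm:square-of-generator} applies and yields at once that both $(e^{t\Delta^P})_{t\ge 0}$ and $(e^{-t(\Delta^P)^2})_{t\ge 0}$ are uniformly eventually strongly positive with respect to $\one$. Since $\spb(\Delta^P)=0\in\{-1,0\}$, it remains only to check, in order to invoke part~(a), that $-(\Delta^P)^2\ne\Delta^P$; this is immediate because $L^2(0,1)$ is infinite-dimensional (as noted in the remark following Theorem~\ref{thm:square-of-generator}), or concretely because $\Delta^P$ has eigenvalue $-4\pi^2$ whereas $-(\Delta^P)^2$ has eigenvalue $-16\pi^4\ne-4\pi^2$ on the same two-dimensional eigenspace. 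Part~(a) of Theorem~\ref{thm:square-of-generator} then gives that neither semigroup individually eventually dominates the other, completing the proof.

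There is essentially no serious obstacle here: the argument is a routine verification of the hypotheses of an already established theorem. The only points needing a line of justification are the one-dimensional Sobolev embedding $D(\Delta^P)\subseteq H^2(0,1)\hookrightarrow L^\infty(0,1)$ used for the smoothing assumption and the identification of $-(\Delta^P)^2$ with the periodic bi-Laplacian, both of which are standard.
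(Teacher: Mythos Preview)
Your proof is correct and follows exactly the paper's approach: the proposition is stated as a direct consequence of Theorem~\ref{thm:square-of-generator}(a) with $A=\Delta^P$ and $u=\one$, and you have simply spelled out the (routine) verification of its hypotheses in more detail than the paper does.
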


We note in passing the well-known fact that actually $e^{t\Delta^P}f \gg_{\one} 0$ for each $0 < f \in L^2(0,1)$ and \emph{each} time $t > 0$, i.e., the semigroup $(e^{t\Delta^P})_{t \ge 0}$ is even \emph{immediately strongly positive} with respect to $\one$. However, it is worthwhile noting that the bi-Laplace semigroup $(e^{-t(\Delta^P)^2})_{t\ge 0}$ is not positive for small times.

\subsection{Diffusion on networks}

We consider a finite connected graph $\mG=(\mV,\mE)$. Upon fixing an arbitrary orientation of $\mG$ each edge $\me\equiv(\mv,\mw)$ can be identified with an interval $[0,\ell_\me]$ and its endpoints $\mv,\mw$ with $0$ and $\ell_\me$, respectively. In such a way one naturally turns the $\mG$ into a metric measure space $\mathcal G$: a \textit{network} whose underlying discrete graph is precisely $\mG$~\cite[Chapter 2]{Mug14}.

We shall be interested in the Laplacian $\Delta^{\mathcal G}$ on $\mathcal G$, which is defined edgewise as the second derivative and whose domain consists in the space of all functions that are in the Sobolev space $H^2(0,\ell_\me)$ on each edge $\me$ and which satisfy so-called \textit{natural conditions} on a $\mathcal{V}$: we impose (i) continuity of the functions at the vertices $v \in \mV$ as well as (ii) the Kirchhoff condition: at each vertex the normal derivatives along all neighbouring edges sum up to 0. 

It is well-known that $\Delta^{\mathcal G}$ is associated with the form
\[
	(u,v)\mapsto \sum_{\me\in\mE}\int_0^{\ell_\me}u'(x)\overline{v'(x)}\ dx,\qquad u,v\in H^1(\mathcal G);
\]
this is a Dirichlet form~\cite{KraMugSik07}, hence the associated heat semigroup is positive. 
(Here $H^1(\mathcal G)$ denotes the space of all functions in $ \bigoplus_{\me\in\mE}H^1(0,\ell_\me)$ which are in addition continuous at the vertices.) Indeed, this semigroup is stochastic on all $\mathcal G$, hence the heat content of any two graphs of same total length is the same at each time.

If $\mathcal G'$ is obtained from $\mathcal G$ by identifying two vertices, then $\Delta_{\mathcal G'}$ and $\Delta_{\mathcal G}$ act on the Hilbert space $L^2(\mathcal G)\simeq L^2(\mathcal G')=\bigoplus_{\me\in \mE}L^2(0,\ell_\me)$, and by~\cite[Prop.~6.70]{Mug14} they have the same spectral bound. Hence, we conclude from Theorem~\ref{thm:main-result-uniform} the following.

\begin{prop}
	Let $\mathcal G$ be a network whose underlying graph is finite and connected. If $\mathcal G'$ is obtained from $\mathcal G$ by identifying two vertices, then neither does $(e^{t\Delta^{\mathcal G}})_{t\in [0,\infty)}$ eventually dominate $(e^{t\Delta^{\mathcal G'}})_{t\in [0,\infty)}$ nor vice versa.
\end{prop}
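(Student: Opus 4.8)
The plan is to obtain the result as a direct application of Theorem~\ref{thm:main-result-uniform}, taking as quasi-interior point $u := \one \in L^2(\mathcal G)$, the constant function with value $1$ on $\mathcal G$; for this choice $(L^2(\mathcal G))_{\one}$ is exactly $L^\infty(\mathcal G)$. First I would check the three blocks of hypotheses of that theorem. \emph{Self-adjointness}: both $\Delta^{\mathcal G}$ and $\Delta^{\mathcal G'}$ are associated with the symmetric Dirichlet form $(u,v) \mapsto \sum_{\me \in \mE} \int_0^{\ell_\me} u'\,\overline{v'}\,\mathrm{d}x$ on the respective form domains in $H^1(\mathcal G)$ and $H^1(\mathcal G')$, so they are self-adjoint, and the two semigroups are analytic and positive. \emph{Smoothing}: analyticity gives $e^{t\Delta^{\mathcal G}}L^2(\mathcal G) \subseteq D(\Delta^{\mathcal G}) \subseteq \bigoplus_{\me \in \mE} H^2(0,\ell_\me) \subseteq L^\infty(\mathcal G) = (L^2(\mathcal G))_{\one}$ for every $t > 0$, since $H^2$ of a bounded interval embeds into $L^\infty$; the same holds for $\mathcal G'$, so the smoothing assumption is satisfied with any $t_0 > 0$. \emph{Positivity}: since $\mathcal G$ is connected, and since identifying two vertices of a connected graph again yields a connected graph, both heat semigroups are irreducible; together with the smoothing into bounded (in fact continuous) functions on the compact network this forces, e.g.\ by \cite[Corollary~3.5]{DanersUNIF}, that both semigroups are uniformly eventually strongly positive with respect to $\one$ (indeed they are positivity-improving for every $t > 0$, hence even immediately strongly positive).

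Next I would pin down the spectral bounds. The constant function $\one$ lies in $D(\Delta^{\mathcal G}) \cap D(\Delta^{\mathcal G'})$ and is annihilated by both operators — constants trivially satisfy continuity and the Kirchhoff condition at every vertex, and this is unaffected by identifying vertices — so both semigroups are stochastic, and being contractions on $L^2$ they have spectral bound $0$; thus $\spb(\Delta^{\mathcal G}) = 0 = \spb(\Delta^{\mathcal G'})$, which is also the content of \cite[Prop.~6.70]{Mug14}. Finally, the two operators are distinct: identifying two distinct vertices strictly tightens the continuity constraint, so a function assuming different boundary values at the two identified vertices belongs to $D(\Delta^{\mathcal G})$ but not to $D(\Delta^{\mathcal G'})$.

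With all hypotheses of Theorem~\ref{thm:main-result-uniform} verified for both assignments of roles — $(A,B) = (\Delta^{\mathcal G'},\Delta^{\mathcal G})$ and $(A,B) = (\Delta^{\mathcal G},\Delta^{\mathcal G'})$ — the conclusion follows from the equivalence of assertions~(i) and~(iii) of that theorem: if $(e^{t\Delta^{\mathcal G}})_{t \in [0,\infty)}$ eventually dominated $(e^{t\Delta^{\mathcal G'}})_{t \in [0,\infty)}$, even only in the weakest (individual) sense of~(i), then~(iii) would give $\spb(\Delta^{\mathcal G}) > \spb(\Delta^{\mathcal G'})$, contradicting the equality of the spectral bounds; the reverse direction is symmetric. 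The only point requiring a little care is the positivity block — recalling precisely why irreducibility combined with the $L^\infty$-smoothing lifts the two positive semigroups to being \emph{uniformly eventually strongly positive} with respect to $\one$ — but this is exactly the setting of \cite[Corollary~3.5]{DanersUNIF}, so no additional argument is needed, and I do not expect any genuine obstacle.
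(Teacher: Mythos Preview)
Your proposal is correct and follows essentially the same approach as the paper: verify the hypotheses of Theorem~\ref{thm:main-result-uniform} with $u=\one$, observe that both spectral bounds equal $0$ (the paper cites \cite[Prop.~6.70]{Mug14} for this, while you argue directly via $\one\in\ker\Delta^{\mathcal G}\cap\ker\Delta^{\mathcal G'}$), and conclude from the equivalence (i)$\Leftrightarrow$(iii) that eventual domination in either direction is impossible. You are simply more explicit than the paper in checking the smoothing and strong positivity assumptions, which the paper leaves implicit in the discussion preceding the proposition.
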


A few graph operations that do strictly reduce the lowest non-zero eigenvalue of $\Delta^{\mathcal G}$ and hence enforce eventual domination are known: they are more technical and discussing them goes beyond the scope of this note. We refer the interested reader to~\cite[\S~3]{BerKenKur19}.

\subsection{Non-monotonicity of semigroups} \label{subsection:non-monotonicity-of-semigroups}

We conclude the article by getting back to the discussion from Subsection~\ref{subsection:heat-semigroup-at-different-times} (in a more general setting). First we note that a positive semigroup $(e^{tA})_{t \in [0,\infty)}$ on a Banach lattice $E$ cannot dominate the semigroup $(e^{tcA})_{t \in [0,\infty)}$ for any $c \in (1,\infty)$ unless the semigroup operators have a very special structure:

\begin{prop} \label{prop:non-decreasing-orbits}
	Let $E$ be a (real or complex) Banach lattice and let $(e^{tA})_{t \in [0,\infty)}$ be a positive $C_0$-semigroup on $E$. If there exists a number $c \in (1,\infty)$ such that $e^{tcA} \le e^{tA}$ for all times $t \in [0,\infty)$, then $0 \le e^{tA} \le \id_E$ for all $t \in [0,\infty)$.
\end{prop}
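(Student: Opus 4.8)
The plan is to reduce everything to an iteration argument combined with the strong continuity of the semigroup at $t = 0$. Write $T_t := e^{tA}$ for brevity. The hypothesis $e^{tcA} \le e^{tA}$ for all $t \ge 0$ can be rephrased, upon substituting $s = ct$ and using $c > 1$, as
\[
	T_s \le T_{s/c} \qquad \text{for all } s \in [0,\infty).
\]
First I would iterate this inequality: for fixed $s$, applying the hypothesis at time $s/c^{k+1}$ gives $T_{s/c^k} = T_{c \cdot s/c^{k+1}} \le T_{s/c^{k+1}}$, and hence
\[
	T_s \le T_{s/c} \le T_{s/c^2} \le \dots \le T_{s/c^n} \qquad \text{for every } n \in \bbN.
\]

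Next I would pass to the limit $n \to \infty$. Since $s/c^n \to 0$ and the semigroup is strongly continuous, $T_{s/c^n}f \to T_0 f = f$ for every $f \in E$. Fix $0 \le f \in E$; then $f - T_s f = \lim_{n\to\infty}(T_{s/c^n}f - T_s f)$, and each term on the right lies in the positive cone $E_+$. As $E_+$ is norm-closed, the limit lies in $E_+$ as well, i.e.\ $T_s f \le f$. Since $f \ge 0$ was arbitrary, $T_s \le \id_E$; combined with the positivity of the semigroup this is exactly $0 \le e^{sA} \le \id_E$, as $s \ge 0$ was arbitrary.

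There is no serious obstacle here: the only points requiring (minor) care are getting the direction of the inequality right after the substitution $s = ct$ --- the hypothesis forces the value at the \emph{larger} time to be dominated by the value at the \emph{smaller} time, which is precisely what makes the iteration collapse towards $t = 0$ rather than diverge --- and the observation that the order relation is preserved under strong limits because $E_+$ is closed. Everything else is a one-line computation.
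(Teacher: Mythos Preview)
Your proof is correct and follows essentially the same approach as the paper: substitute to rewrite the hypothesis as $T_s \le T_{s/c}$, iterate to obtain $T_s \le T_{s/c^n}$, and pass to the strong limit using strong continuity and closedness of $E_+$. The paper's proof is slightly terser but the argument is identical.
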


The inequality $0 \le e^{tA} \le \id_E$ means that $e^{tA}$ is an element of the so-called \emph{center} of the space of bounded linear operators on $E$. If $E = L^p(\Omega,\mu)$ for $p \in [1,\infty]$ and for a $\sigma$-finite measure space $(\Omega,\mu)$, then this center consists precisely of the multiplication operators (which are given by multiplication with a fixed function from $L^\infty(\Omega,\mu)$). For a few more details we refer for instance to \cite[Section~C-I-9]{Nag86}.

\begin{proof}[Proof of Proposition~\ref{prop:non-decreasing-orbits}]
	By replacing $t$ with $t/c^{n+1}$ we obtain for each $n \in \bbN_0$ the inequality $e^{t/c^n A} \le e^{t/c^{n+1}A}$ for all $t \in [0,\infty)$. Iteration of this inequality yields that $e^{tA} \le e^{t/c^n A}$ for all $n \in \bbN_0$, so the assertion $e^{tA} \le \id_E$ follows by taking the strong operator limit as $n \to \infty$.
\end{proof}

On the other hand Theorem~\ref{thm:main-result-uniform} implies that, in the Hilbert space case and under appropriate technical assumptions, we have $e^{tcA} \ge e^{tA}$ for all sufficiently large times $t$. Let us state this explicitly in the following theorem.

\begin{theorem} \label{thm:eventual-monotonicity}
	Let $L^2 := L^2(\Omega,\mu)$ for a $\sigma$-finite measure space $(\Omega,\mu)$ and let $u \in (L^2)_+$ be a function which is strictly positive almost everywhere. Consider a real and self-adjoint $C_0$-semigroups $(e^{tA})_{t \in [0,\infty)}$  whose generator $A$ satisfies $\spb(A) < 0$. Suppose that $\ker(\spb(A) - A)$ is one-dimensional and contains a vector $v \gg_u 0$, and assume that $e^{t_0A}L^2 \subseteq (L^2)_u$ for at least one time $t_0$.
	
	Then, for every number $c \in (1,\infty)$, there exists a time $t_c \in [0,\infty)$ such that $0 \le e^{tcA} \le e^{tA}$ for all $t \ge t_c$.
\end{theorem}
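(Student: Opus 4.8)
The plan is to obtain the statement as a direct consequence of Theorem~\ref{thm:main-result-uniform}, applied with $(e^{tcA})_{t \in [0,\infty)}$ in the role of the dominated semigroup ``$(e^{tA})_{t \in [0,\infty)}$'' there and $(e^{tA})_{t \in [0,\infty)}$ in the role of the dominating semigroup ``$(e^{tB})_{t \in [0,\infty)}$'' there. So the first, and essentially only, task is to verify the hypotheses of that theorem for the pair $(cA, A)$.

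Self-adjointness of $cA$ and of $A$ is clear. For the smoothing assumption I would first observe that $e^{sA}L^2 = e^{tA}(e^{(s-t)A}L^2) \subseteq e^{tA}L^2$ whenever $s \ge t$, so the inclusion $e^{t_0A}L^2 \subseteq (L^2)_u$ automatically holds with every later time in place of $t_0$; since $c > 1$ this gives both $e^{t_0A}L^2 \subseteq (L^2)_u$ and $e^{t_0(cA)}L^2 = e^{(ct_0)A}L^2 \subseteq e^{t_0A}L^2 \subseteq (L^2)_u$, so the smoothing assumption holds for both operators with the single time $t_0$. For the positivity assumptions I would argue as in the proof of Theorem~\ref{thm:square-of-generator}: since $\spb(cA) = c\,\spb(A)$, we have $\ker(\spb(cA) - cA) = \ker(\spb(A) - A)$, which by hypothesis is one-dimensional and spanned by $v \gg_u 0$; combining this with self-adjointness and the smoothing property just checked, \cite[Corollary~3.5]{DanersUNIF} shows that both $(e^{tcA})_{t \in [0,\infty)}$ and $(e^{tA})_{t \in [0,\infty)}$ are uniformly eventually strongly positive with respect to $u$, hence in particular uniformly eventually positive. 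Finally, the two semigroups are distinct, for $e^{tA} = e^{tcA}$ for all $t$ would force $A = cA$, hence $A = 0$ and $\spb(A) = 0$, contradicting $\spb(A) < 0$.

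Once the hypotheses are in place, Theorem~\ref{thm:main-result-uniform} applies to the pair $(cA, A)$. Since $\spb(A) < 0$ and $c > 1$, we have $\spb(cA) = c\,\spb(A) < \spb(A)$, which is exactly assertion~(iii) of Theorem~\ref{thm:main-result-uniform} for this pair. Hence assertion~(ii) holds as well, so there are a time $t_1 \in [0,\infty)$ and a number $\delta > 0$ with $e^{tA} \ge e^{tcA} + \delta\, u \otimes u \ge e^{tcA}$ for all $t \ge t_1$. Choosing in addition a time $t_2$ with $e^{tcA} \ge 0$ for all $t \ge t_2$ (available by the uniform eventual positivity of $(e^{tcA})_{t \in [0,\infty)}$), we get $0 \le e^{tcA} \le e^{tA}$ for all $t \ge t_c := \max\{t_1, t_2\}$, which is the assertion.

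No genuine difficulty is anticipated; the entire argument reduces to a bookkeeping check of the hypotheses of Theorem~\ref{thm:main-result-uniform} together with the observation that $\spb(cA) < \spb(A)$ when $\spb(A) < 0$ and $c > 1$. The only point that calls for a small amount of care is the smoothing assumption for $cA$, where one should invoke the monotonicity $e^{sA}L^2 \subseteq e^{tA}L^2$ (for $s \ge t$) rather than attempting to transport the inclusion $e^{t_0A}L^2 \subseteq (L^2)_u$ through the operators $e^{(s-t)A}$, which need not map $(L^2)_u$ into itself.
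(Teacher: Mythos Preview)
Your proposal is correct and follows essentially the same route as the paper's proof: verify the hypotheses of Theorem~\ref{thm:main-result-uniform} for the pair $(cA,A)$ via \cite[Corollary~3.5]{DanersUNIF}, observe that $\spb(cA)=c\,\spb(A)<\spb(A)$, and conclude. Your version is simply more explicit than the paper's terse argument, in particular spelling out the smoothing assumption for $cA$, the distinctness of the two semigroups, and the lower bound $0\le e^{tcA}$ for large $t$.
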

\begin{proof}
	Fix $c \in (1,\infty)$ and consider the self-adjoint operators $A$ and $cA$. The semigroups $(e^{tA})_{t \in [0,\infty)}$ and $(e^{tA})_{t \in [0,\infty)}$ are uniformly eventually strongly positive with respect to $u$ according to \cite[Corollary~3.5]{DanersUNIF}, and we have $\spb(cA) = c\spb(A) < \spb(A)$. Thus, Theorem~\ref{thm:main-result-uniform} yields the assertion.
\end{proof}

\begin{example} \label{ex:eventual-monotonicity-for-dirichlet-laplacian}
	Let $\Omega \subseteq \bbR^d$ be a bounded domain with Lipschitz boundary. Let $\calA: \Omega \to \bbR^{d \times d}$ be a measurable and essentially bounded mapping. Assume moreover that the matrix $\calA(x)$ is symmetric for almost every $x \in \Omega$ and that there exists a constant $\nu > 0$ such that the ellipticity condition
	\begin{align*}
		\langle \xi, \calA(x) \xi \rangle \ge \nu \norm{\xi}^2 \qquad \text{for all } \xi \in \bbR^d
	\end{align*}
	holds for almost all $x \in \Omega$. Let $A^D$ denote the realization of the operator $v \mapsto \diver(\calA \nabla v)$ on $L^2(\Omega)$ with Dirichlet boundary conditions.
	
	Then $A^D$ is self-adjoint and satisfies $\spb(A^D) < 0$. Its first eigenspace is one-dimensional and spanned by an eigenfunction $u$ that is strictly positive almost everywhere. Since $\Omega$ has Lipschitz boundary, it follows from \cite[Lemma~2]{Davies1987} that the operator $e^{tA^D}$ maps $L^2(\Omega)$ into the principal ideal $(L^2(\Omega))_u$ for each $t > 0$. Thus, it follows from Theorem~\ref{thm:eventual-monotonicity} that, for every number $c \in (1,\infty)$, there exists a time $t_c$ such that $e^{tcA^D} \le e^{tA^D}$ for all $t \ge t_c$. Moreover, we cannot have $t_c = 0$ according to Proposition~\ref{prop:non-decreasing-orbits}.
\end{example}

\subsection*{Acknowledgements}

Around 2015 at Ulm University (Germany), Khalid Akhlil (now in Ouarzazate, Morocco) suggested to the first-named author to seek for characterisations of eventual domination of operator semigroups in the spirit of \cite[Theorem~5.4]{Daners2016} and \cite[Theorem~5.2]{DanGluKen16b}, but they did not further pursue this suggestion. A similar idea came up once again in a discussion between the present authors at the end of 2017, which led to the present article.

We are indebted to Sahiba Arora (Dresden, Germany) for several suggestions which helped us to improve the paper.

\bibliographystyle{plain}
\bibliography{literature}

\begin{thebibliography}{10}

\bibitem{Akhlil2018}
Khalid Akhlil.
\newblock Locality and domination of semigroups.
\newblock {\em Results Math.}, 73(2):Art. 59, 11, 2018.

\bibitem{AreBat92}
W.~{Arendt} and C.~J.~K. {Batty}.
\newblock {Domination and ergodicity for positive semigroups}.
\newblock {\em {Proc. Am. Math. Soc.}}, 114(3):743--747, 1992.

\bibitem{Arendt2004}
Wolfgang Arendt.
\newblock Semigroups and evolution equations: functional calculus,regularity
  and kernel estimates.
\newblock In {\em Evolutionary equations. {V}ol. {I}}, Handb. Differ. Equ.,
  pages 1--85. North-Holland, Amsterdam, 2004.

\bibitem{Arendt2011}
Wolfgang Arendt, Charles J.~K. Batty, Matthias Hieber, and Frank Neubrander.
\newblock {\em Vector-valued {L}aplace transforms and {C}auchy problems},
  volume~96 of {\em Monographs in Mathematics}.
\newblock Birkh\"auser/Springer Basel AG, Basel, second edition, 2011.

\bibitem{BerKenKur19}
G.~Berkolaiko, J.B. Kennedy, P.~Kurasov, and D.~Mugnolo.
\newblock Surgery principles for the spectral analysis of quantum graphs.
\newblock {\em Trans.\ Amer.\ Math.\ Soc.}, 372:5153--5197, 2019.

\bibitem{Daners2014}
Daniel Daners.
\newblock Non-positivity of the semigroup generated by the
  {D}irichlet-to-{N}eumann operator.
\newblock {\em Positivity}, 18(2):235--256, 2014.

\bibitem{Daners2017}
Daniel Daners and Jochen Gl\"uck.
\newblock The role of domination and smoothing conditions in the theory of
  eventually positive semigroups.
\newblock {\em Bull. Aust. Math. Soc.}, 96(2):286--298, 2017.

\bibitem{DanersUNIF}
Daniel Daners and Jochen Gl\"{u}ck.
\newblock A criterion for the uniform eventual positivity of operator
  semigroups.
\newblock {\em Integr. Equ. Oper. Theory}, 90(4):Art. 46, 19, 2018.

\bibitem{DanersPERT}
Daniel Daners and Jochen Gl\"{u}ck.
\newblock Towards a perturbation theory for eventually positive semigroups.
\newblock {\em J. Operator Theory}, 79(2):345--372, 2018.

\bibitem{DanGluKen16b}
Daniel Daners, Jochen Gl{\"u}ck, and James~B. Kennedy.
\newblock Eventually and asymptotically positive semigroups on {B}anach
  lattices.
\newblock {\em J. Differential Equations}, 261(5):2607--2649, 2016.

\bibitem{Daners2016}
Daniel Daners, Jochen Gl{\"u}ck, and James~B. Kennedy.
\newblock Eventually positive semigroups of linear operators.
\newblock {\em J. Math. Anal. Appl.}, 433(2):1561--1593, 2016.

\bibitem{Davies1987}
E.~B. {Davies}.
\newblock {The equivalence of certain heat kernel and Green function bounds.}
\newblock {\em {J. Funct. Anal.}}, 71:88--103, 1987.

\bibitem{Davies1989}
E.~B. Davies.
\newblock {\em Heat kernels and spectral theory}, volume~92 of {\em Cambridge
  Tracts in Mathematics}.
\newblock Cambridge University Press, Cambridge, 1989.

\bibitem{Emelyanov2007}
E.~Yu. Emel'yanov.
\newblock {\em Non-spectral asymptotic analysis of one-parameter operator
  semigroups}, volume 173 of {\em Operator Theory: Advances and Applications}.
\newblock Birkh\"auser Verlag, Basel, 2007.

\bibitem{Emelyanov2001}
E.~Yu. Emel'yanov, U.~Kohler, F.~R{\"a}biger, and M.~P.~H. Wolff.
\newblock Stability and almost periodicity of asymptotically dominated
  semigroups of positive operators.
\newblock {\em Proc. Amer. Math. Soc.}, 129(9):2633--2642 (electronic), 2001.

\bibitem{Engel2000}
K.-J. Engel and R.~Nagel.
\newblock {\em One-parameter semigroups for linear evolution equations}, volume
  194 of {\em Graduate Texts in Mathematics}.
\newblock Springer-Verlag, New York, 2000.
\newblock With contributions by S. Brendle, M. Campiti, T. Hahn, G. Metafune,
  G. Nickel, D. Pallara, C. Perazzoli, A. Rhandi, S. Romanelli and R.
  Schnaubelt.

\bibitem{FerGazGru08}
A.~Ferrero, F.~Gazzola, and H.-C. Grunau.
\newblock Decay and eventual local positivity for biharmonic parabolic
  equations.
\newblock {\em Disc. Cont. Dynam. Syst}, 21:1129--1157, 2008.

\bibitem{Ferrero2008}
Alberto Ferrero, Filippo Gazzola, and Hans-Christoph Grunau.
\newblock Decay and eventual local positivity for biharmonic parabolic
  equations.
\newblock {\em Discrete Contin. Dyn. Syst.}, 21(4):1129--1157, 2008.

\bibitem{Gao2013}
Niushan Gao.
\newblock Extensions of {P}erron-{F}robenius theory.
\newblock {\em Positivity}, 17(4):965--977, 2013.

\bibitem{GlueckDISS}
Jochen Gl{\"u}ck.
\newblock {\em Invariant Sets and Long Time Behaviour of Operator Semigroups}.
\newblock PhD thesis, Universit\"at Ulm, 2016.
\newblock DOI: 10.18725/OPARU-4238.

\bibitem{Glueck2017}
Jochen Gl\"uck.
\newblock Towards a {P}erron-{F}robenius theory for eventually positive
  operators.
\newblock {\em J. Math. Anal. Appl.}, 453(1):317--337, 2017.

\bibitem{GregorioMugnolo}
Federica {Gregorio} and Delio {Mugnolo}.
\newblock {Bi-Laplacians on graphs and networks}.
\newblock {\em {J. Evol. Equ.}}, 20(1):191--232, 2020.

\bibitem{GregorioPreprint}
Federica {Gregorio} and Delio {Mugnolo}.
\newblock {Higher order operators on networks: hyperbolic and parabolic
  theory}.
\newblock {\em {Integr. Equ. Oper. Theory}}, 92:Art. 50, 22, 2020.

\bibitem{HesSchUhl77}
H.\ Hess, R.\ Schrader, and D.A. Uhlenbrock.
\newblock Domination of semigroups and generalization of {K}ato's inequality.
\newblock {\em Duke Math.\ J.}, 44:893--904, 1977.

\bibitem{KraMugSik07}
M.\ {Kramar Fijav\v{z}}, D.\ Mugnolo, and E.\ Sikolya.
\newblock Variational and semigroup methods for waves and diffusion in
  networks.
\newblock {\em Appl.\ Math.\ Optim.}, 55:219--240, 2007.

\bibitem{LenSchWir20}
Daniel {Lenz}, Marcel {Schmidt}, and Melchior {Wirth}.
\newblock {Domination of quadratic forms}.
\newblock {\em {Math. Z.}}, 296(1-2):761--786, 2020.

\bibitem{Meyer-Nieberg1991}
P.~Meyer-Nieberg.
\newblock {\em Banach lattices}.
\newblock Universitext. Springer-Verlag, Berlin, 1991.

\bibitem{Mug14}
D.~Mugnolo.
\newblock {\em {Semigroup Methods for Evolution Equations on Networks}}.
\newblock Underst.\ Compl.\ Syst. Springer-Verlag, Berlin, 2014.

\bibitem{Nag86}
R.\ Nagel, editor.
\newblock {\em One-{P}arameter {S}emigroups of {P}ositive {O}perators}, volume
  1184 of {\em Lect.\ Notes Math.}
\newblock Springer-Verlag, Berlin, 1986.

\bibitem{Ouh96}
E.M. Ouhabaz.
\newblock Invariance of closed convex sets and domination criteria for
  semigroups.
\newblock {\em Potential Analysis}, 5:611--625, 1996.

\bibitem{Ouh05}
E.M. Ouhabaz.
\newblock {\em Analysis of {H}eat {E}quations on {D}omains}, volume~30 of {\em
  Lond.\ Math.\ Soc.\ Monograph Series}.
\newblock Princeton Univ.\ Press, Princeton, NJ, 2005.

\bibitem{Schaefer1974}
H.~H. Schaefer.
\newblock {\em Banach lattices and positive operators}.
\newblock Springer-Verlag, New York, 1974.
\newblock Die Grundlehren der mathematischen Wissenschaften, Band 215.

\bibitem{Shakeri2017}
F.~Shakeri and R.~Alizadeh.
\newblock Nonnegative and eventually positive matrices.
\newblock {\em Linear Algebra Appl.}, 519:19--26, 2017.

\bibitem{Sim77}
B.~Simon.
\newblock An abstract {K}ato's inequality for generators of positivity
  preserving semigroups.
\newblock {\em Indiana Univ.\ Math.\ J.}, 26:1067--1073, 1977.

\end{thebibliography}

\end{document}